\newtheorem{theorem}{Theorem}[section]
\newtheorem{prop}[theorem]{Proposition}
\newenvironment{proof-sketch}{\noindent{\bf Sketch of Proof}\hspace*{1em}}{\qed\bigskip}
\newcommand{\RR}{\mathbb R}
\newcommand{\NN}{\mathbb N}
\newcommand{\ZZ}{\mathbb Z}
\newcommand{\bb}{\begin{equation}}
\newcommand{\bbb}{\end{equation}}
\renewcommand{\leq}{\leqslant}
\renewcommand{\geq}{\geqslant}
\begin{document}
\title[Superlinear perturbations of the Robin eigenvalue problem]{Superlinear perturbations of the eigenvalue \\
problem for the Robin Laplacian plus an \\ indefinite and unbounded potential} 
\author[N.S. Papageorgiou]{Nikolaos S. Papageorgiou}
\address[N.S. Papageorgiou]{Department of Mathematics, National Technical University,
				Zografou Campus, Athens 15780, Greece \& Institute of Mathematics, Physics and Mechanics, 1000 Ljubljana, Slovenia}
\email{\tt npapg@math.ntua.gr}
\author[V.D. R\u{a}dulescu]{Vicen\c{t}iu D. R\u{a}dulescu}
\address[V.D. R\u{a}dulescu]{Faculty of Applied Mathematics, AGH University of Science and Technology, al. Mickiewicza 30, 30-059 Krak\'{o}w, Poland  \& Department of Mathematics, University of Craiova, 200585 Craiova, Romania \& Institute of Mathematics, Physics and Mechanics, 1000 Ljubljana, Slovenia}
\email{\tt radulescu@inf.ucv.ro}
\author[D.D. Repov\v{s}]{Du\v{s}an D. Repov\v{s}}
\address[D.D. Repov\v{s}]{ Faculty of Education and Faculty of Mathematics and Physics, University of Ljubljana,  \& Institute of Mathematics, Physics and Mechanics, 1000 Ljubljana, Slovenia}
\email{\tt dusan.repovs@guest.arnes.si}
\keywords{Superlinear perturbation, regularity theory, maximum principle, constant sign and nodal solutions, critical groups, indefinite potential.\\
\phantom{aa} {\it 2010 Mathematics Subject Classification}. Primary:  35J20. Secondary: 35J60, 58E05.}
\begin{abstract}
We consider a superlinear perturbation of the eigenvalue problem for the Robin Laplacian plus an indefinite and unbounded potential. Using variational tools and critical groups, we show that when $\lambda$ is close to a nonprincipal eigenvalue, then the problem has seven nontrivial solutions. We provide sign information for six of them.
\end{abstract}
\maketitle

\section{Introduction}

Let $\Omega\subseteq \RR^N$ ($N\geq2$) be a bounded domain with a $C^2$-boundary $\partial\Omega$. In this paper, we study the following parametric semilinear Robin problem
\begin{equation}\tag{\mbox{$P_\lambda$}}
  \left\{
\begin{array}{lll}
-\Delta u(z)+\xi(z)u(z)=\lambda u(z)+f(z,u(z)) \text{ in } \Omega,\\
\frac{\partial u}{\partial n}+\beta(z)u=0 \mbox{ on }\partial\Omega,\ \lambda\in\RR.
\end{array}
\right.
\end{equation}

In this problem, $\xi\in L^s(\Omega)$ with $s>N$ and $\xi(\cdot)$ is indefinite (that is, sign-changing). We assume that $\xi(\cdot)$ is bounded from above (that is, $\xi^+\in L^\infty(\Omega)$). So, the differential operator (the left-hand side) of problem $(P_\lambda)$ is not coercive. In the reaction (the right-hand side) of $(P_\lambda)$, we have the parametric linear term $u\mapsto \lambda u$ and a perturbation $f(z,x)$ which is a measurable function such that $f(z,\cdot)$ is continuously differentiable. We assume that $f(z,\cdot)$ exhibits superlinear growth near $\pm\infty$, but without satisfying the (usual in such cases) Ambrosetti-Rabinowitz condition (the $AR$-condition for short). Instead, we employ a less restrictive condition that incorporates in our framework superlinear nonlinearities with slower growth near $\pm \infty$ which fail to satisfy the $AR$-condition. So, problem $(P_\lambda)$ can be viewed as a perturbation of the classical eigenvalue problem for the operator $u\mapsto-\Delta u+\xi(z)u$ with Robin boundary condition.

In the past, such problems were studied primarily in the context of Dirichlet equations with no potential term. The first work was by Mugnai \cite{5Mug}, who used a general linking theorem of Marino \& Saccon \cite{4Mar-Sac} to produce three nontrivial solutions. The work of Mugnai was extended by Rabinowitz, Su \& Wang \cite{15Rab-Su-Wan} who based their method of proof on bifurcation theory, variational techniques and critical groups, in order to produce three nontrivial solutions. Analogous results for scalar periodic equations were proved by Su \& Zeng \cite{16Su-Zen}. All the aforementioned works used the $AR$-condition to express the superlinearity of the perturbation $f(z,\cdot)$. A more general superlinearity condition was employed by Ou \& Li \cite{7Ou-Li} who also produced three nontrivial solutions for $\lambda>0$ near a nonprincipal eigenvalue. As we have already mentioned earlier,  there is no potential term in all the aforementioned works, and so the differential operator is coercive. This facilitates the analysis of the problem. Papageorgiou, R\u{a}dulescu \& Repov\v{s} \cite{12Pap-Rad-Rep} went beyond Dirichlet problems and studied Robin problems with an indefinite potential. In \cite{12Pap-Rad-Rep} the emphasis was on the existence and multiplicity of positive solutions. So, the conditions on the perturbation $f(z,\cdot)$ were different, leading to a bifurcation-type result describing the change in the set of positive solutions as the parameter $\lambda$ moves in $\overset{\circ}{\RR}_+=(0,+\infty)$. We also mention the works of Castro, Cassio \& Velez \cite{1Cas-Cas-Vel}, Papageorgiou \& Papalini \cite{8Pap-Pap} (Dirichlet problems), and Hu \& Papageorgiou \cite{3Hu-Pap} (Robin problems) who also produced seven nontrivial solutions. In Castro, Cassio \& Velez \cite{1Cas-Cas-Vel} there is no potential term, while Papageorgiou \& Papalini \cite{8Pap-Pap} and Hu \& Papageorgiou \cite{3Hu-Pap} have an indefinite potential term and moreover, provide sign information for all solution they produce.
For related results we refer to Papageorgiou \& R\u{a}dulescu \cite{prans}, Papageorgiou \& Winkert \cite{pawi}, Papageorgiou \& Zhang \cite{pzana}, and Rolando \cite{rolando}.
Finally, we mention the work of Papageorgiou \& R\u{a}dulescu  \cite{11Pap-Rad} who proved multiplicity results for nearly resonant Robin problems.

In the present paper, using variational tools from the critical point theory together with suitable truncation, perturbation and comparison techniques and using also critical groups (Morse theory), we show that when the parameter $\lambda>0$ is close to an eigenvalue of $(-\Delta u+\xi u,\;H^1(\Omega))$ with Robin boundary condition, then the problem has seven nontrivial smooth solutions and we also provide sign information for six of them.

\section{Mathematical background and hypotheses}

The main spaces in the analysis of problem $(P_\lambda)$ are the Sobolev space $H^1(\Omega)$, the Banach space $C^1(\overline{\Omega})$ and the ``boundary" Lebesgue spaces $L^p(\partial\Omega)$, $1\leq p\leq \infty$.

The Sobolev space $H^1(\Omega)$ is a Hilbert space with the following inner product
$$
(u,h)=\int_\Omega uh dz+\int_\Omega (Du,Dh)_{\RR^N}dz \mbox{ for all }u,h\in H^1(\Omega).
$$

We denote by $\|\cdot\|$  the norm corresponding to this inner product. So
$$
\|u\|=\left[\|u\|_2^2+\|Du\|_2^2\right]^{1/2} \mbox{ for all }u\in H^1(\Omega).
$$

The Banach space $C^1(\overline{\Omega})$ is ordered by the positive (order) cone $$C_+=\{u\in C^1(\overline{\Omega}):\: u(z)\geq0 \mbox{ for all }z\in\overline{\Omega}\}.$$ This cone has a nonempty interior given by
$$
{\rm int}\,C_+=\{u\in C_+:\:u(z)>0 \mbox{ for all }z\in\overline{\Omega}\}.
$$

On $\partial\Omega$ we consider the $(N-1)$-dimensional Hausdorff measure (surface measure) $\tau(\cdot)$. Using this measure, we can define in the usual way the boundary value spaces $L^p(\partial\Omega)$, where $1\leq p\leq \infty$. From the theory of Sobolev spaces we know that there exists a unique continuous linear map $\gamma_0: H^1(\Omega)\to L^2(\partial\Omega)$, known as the ``trace map", such that
$$
\gamma_0(u)=u|_{\partial\Omega} \mbox{ for all }u\in H^1(\Omega)\cap C(\overline{\Omega}).
$$

So, the trace map extends the notion of boundary values to all Sobolev functions. We know that
$$
{\rm im}\, \gamma_0=H^{1/2,2}(\partial\Omega) \mbox{ and } \ker \gamma_0=H_0^1(\Omega).
$$

The linear map $\gamma_0(\cdot)$ is compact from $H^1(\Omega)$ into $L^p(\partial\Omega)$ for all $p\in\left[1,\frac{2(N-1)}{N-2}\right)$ if $N\geq3$ and into $L^p(\partial\Omega)$ for all $1\leq p<\infty$, if $N=2$.

In the sequel, for the sake of notational simplicity, we shall drop the use of the map $\gamma_0(\cdot)$. All restrictions of Sobolev functions on $\partial\Omega$ will be understood in the sense of traces.

Let $x\in\RR$. We set $x^{\pm}=\max\{\pm x,0\}$ and for any given $u\in H^1(\Omega)$ we define $u^\pm(z)=u(z)^\pm$ for all $z\in \Omega$. We know that
$$
u^\pm \in H^1(\Omega),\;u=u^+-u^-,\; |u|=u^+ +u^-.
$$

Given $u,v\in H^1(\Omega)$ with $u\leq v$, we set
$$
[u,v]=\{h\in H^1(\Omega):\:u(z)\leq h(z)\leq v(z) \mbox{ for a.a. }z\in\Omega\}.
$$

By ${\rm int}_{C^1(\overline{\Omega})}[u,v]$ we denote the interior of $[u,v]\cap C^1(\overline{\Omega})$ in the $C^1(\overline{\Omega})$-norm topology.

Let us introduce our hypotheses on the potential function $\xi(\cdot)$ and the boundary coefficient $\beta(\cdot)$.

\smallskip
$H_0:$ $\xi\in L^s(\Omega)$ with $s>N$ if $N\geq2$ and $s>1$ if $N=2$, $\xi^+\in L^\infty(\Omega)$ and $\beta\in W^{1,\infty}(\partial\Omega)$ with $\beta(z)\geq0$ for all $z\in\partial\Omega$.

\smallskip
As we have already mentioned in the introduction, our analysis of problem $(P_\lambda)$ relies on the spectrum of $u\mapsto -\Delta u+\xi(z)u$ with Robin boundary condition. So, we consider the following linear eigenvalue problem
\begin{equation}\label{eq1}
  \left\{
\begin{array}{lll}
-\Delta u(z)+\xi(z)u(z)=\hat{\lambda} u(z) \text{ in } \Omega,\\
\frac{\partial u}{\partial n}+\beta(z)u=0 \mbox{ on }\partial\Omega.
\end{array}
\right.
\end{equation}

We say that $\hat{\lambda}\in\RR$ is an ``eigenvalue", if problem \eqref{eq1} admits a nontrivial solution $\hat{u}\in H^1(\Omega)$ known as an ``eigenfunction" corresponding to the eigenvalue $\hat{\lambda}$. From hypotheses $H_0$ and the regularity theory of Wang \cite{17Wan}, we know that $\hat{u}\in C^1(\overline{\Omega})$.

Let $\gamma: H^1(\Omega)\to \RR$ be the $C^2$-functional defined by
$$
\gamma(u)=\|Du\|_2^2+\int_\Omega \xi(z) u^2dz+\int_{\partial\Omega} \beta(z) u^2 d\sigma \mbox{ for all }u\in H^1(\Omega).
$$

From D'Agui, Marano \& Papageorgiou \cite{2DAg-Mar-Pap} (see also Papageorgiou \& R\u{a}dulescu \cite{10Pap-Rad}), we know that there exists $\mu>0$ such that
\begin{equation}\label{eq2}
  \gamma(u)+\mu\|u\|_2^2\geq\hat{C}\|u\|^2 \mbox{ for some }\hat{C}>0\mbox{ and all }u\in H^1(\Omega).
\end{equation}

Using \eqref{eq2} and the spectral theorem for compact, self-adjoint operators on a Hilbert space, we show (see \cite{2DAg-Mar-Pap, 10Pap-Rad}) that the spectrum of \eqref{eq1} consists of a sequence $\{\hat{\lambda}_k\}_{k\in\NN}$ of distinct eigenvalues such that $\hat{\lambda}_k\to+\infty$ as $k\to\infty$. There is also a corresponding sequence $\{\hat{u}_k\}_{k\in\NN}\subseteq H^1(\Omega)$ of eigenfunctions which form an orthogonal basis for $H^1(\Omega)$ and an orthonormal basis for $L^2(\Omega)$. As we have already mentioned, $\hat{u}_k\in C^1(\overline{\Omega})$ for all $k\in\NN$. We denote by $E(\hat{\lambda}_k)$ the eigenspace corresponding to the eigenvalue $\hat{\lambda}_k$. We have $E(\hat{\lambda}_k)\subseteq C^1(\overline{\Omega})$ for all $k\in\NN$, this subspace is finite-dimensional and
$$
H^1(\Omega)=\overline{\underset{k\geq1}{\oplus} E(\hat{\lambda}_k)}.
$$

Moreover, each eigenspace $E(\hat{\lambda}_k)$ has the ``unique continuation property" (the UCP for short) which says that
$$\mbox{ ``if $u\in E(\hat{\lambda}_k)$ and $u(\cdot)$ vanishes on a set of positive measure,} \mbox{ then $u\equiv0$". }
$$

The first (principal) eigenvalue $\hat{\lambda}_1$ is simple, that is, $\dim E(\hat{\lambda}_1)=1$. All eigenvalues admit variational characterizations in terms of the Rayleigh quotient $\frac{\gamma(u)}{\|u\|_2^2}$, $u\in H^1(\Omega)$, $u\not=0$. We have
\begin{equation}\label{eq3}
  \hat{\lambda}=\inf\left\{\frac{\gamma(u)}{\|u\|_2^2}:\: u\in H^1(\Omega), u\not=0\right\},
\end{equation}

\begin{eqnarray}\nonumber
  \hat{\lambda}_k &=& \sup\left\{\frac{\gamma(u)}{\|u\|_2^2}:\:u\in \overline{H}_k=\underset{m=1}{\overset{k}{\oplus}}E(\hat{\lambda}_m),u\not=0\right\} \\
   &=& \inf\left\{\frac{\gamma(u)}{\|u\|_2^2}:\:u\in \hat{H}_k=\overline{\underset{m\geq k}{\oplus}E(\hat{\lambda}_m)},u\not=0\right\},\ k\geq2. \label{eq4}
\end{eqnarray}

The infimum in \eqref{eq3} is realized on $E(\hat{\lambda}_1)$, while in \eqref{eq4} both the supremum and the infimum are realized on $E(\hat{\lambda}_k)$.

It follows from \eqref{eq3} that the elements of $E(\hat{\lambda}_1)$ have fixed sign, while by \eqref{eq4} and the orthogonality of the eigenspaces, we see that the elements of $E(\hat{\lambda}_k)$ (for $k\geq2$) are nodal (that is, sign-changing). We denote by $\hat{u}_1$ the positive, $L^2$-normalized (that is, $\|\hat{u}\|_2=1$) eigenfunction corresponding to $\hat{\lambda}_1$. The regularity theory and the Hopf maximum principle imply that $\hat{u}_1\in {\rm int}\,C_+$.

Let $X$ be a Banach space, $c\in\RR$, and $\varphi\in C^1(X,\RR)$. We introduce the following sets
\begin{eqnarray*}
  K_\varphi &=& \{u\in X:\: \varphi'(u)=0\}  \mbox{ (the critical set of $\varphi$), }\\
  \varphi^c &=& \{u\in X:\: \varphi\leq c\}.
\end{eqnarray*}

We say that $\varphi(\cdot)$ satisfies the ``$C$-condition", if the following property holds:
$$
\mbox{``Every sequence $\{u_n\}_{n\geq1}$ such that }
$$
$$
\{\varphi(u_n)\}_{n\geq1}\subseteq\RR\mbox{ is bounded }
$$
$$
\mbox{ and }(1+\|u_n\|_X)\varphi'(u_n)\to0\mbox{ in }X^* \mbox{ as }n\to\infty,
$$
$$
\mbox{ admits a strongly convergent subsequence". }
$$

This is a compactness-type condition on the functional $\varphi(\cdot)$. Since the ambient space is in general not locally compact (being infinite-dimensional), the burden of compactness is passed to the functional $\varphi(\cdot)$. Using the $C$-condition one can prove a deformation theorem from which follows the minimax theory of the critical values of $\varphi(\cdot)$ (see, for example, Papageorgiou, R\u{a}dulescu \& Repov\v{s} \cite[Chapter 5]{13Pap-Rad-Rep}).

Let $(Y_1,Y_2)$ be a topological pair such that $Y_2\subseteq Y_1\subseteq X$. Given $k\in \NN_0$, we denote the $k$th-relative singular homology group for the pair $(Y_1,Y_2)$ with $\ZZ$-coefficients by  $H_k(Y_1,Y_2)$. If $\varphi\in C^1(X,\RR)$, $u\in K_\varphi$ is isolated and $c=\varphi(u)$, then the critical groups of $\varphi$ at $u$ are defined by
$$
C_k(\varphi,u)=H_k(\varphi^c\cap U, \varphi^c\cap U\setminus\{u\}) \mbox{ for all }k\in \NN_0,
$$
with $U$ being a neighborhood of $u$ such that $K_\varphi\cap\varphi^c\cap U=\{u\}$. The excision property of singular homology implies that the above definition of critical groups is independent of the choice of the isolating neighborhood $U$.

We say that a Banach space $X$  has the ``Kadec-Klee property" if the following is true
$$
``u_n \overset{w}{\to}u \mbox{ in }X \mbox{ and } \|u_n\|_X\to\|u\|_X \Rightarrow u_n\to u \mbox{ in }X".
$$

A uniformly convex space has the Kadec-Klee property. In particular, Hilbert spaces have the Kadec-Klee property.

We denote by $A\in\mathcal{L}(H^1(\Omega),H^1(\Omega)^*)$ the operator defined by
$$
\langle A(u),h\rangle=\int_\Omega (Du,Dh)_{\RR^N} dz \mbox{ for all }u,h\in H^1(\Omega).
$$

Next, we denote by $\delta_{k,m}$  the Kronecker symbol, that is,
$$
\delta_{k,m}=\left\{
               \begin{array}{ll}
                 1, & \hbox{ if }k=m \\
                 0, & \hbox{ if }k\not=m.
               \end{array}
             \right.
$$

Finally, let $2^*$ denote the Sobolev critical exponent corresponding to $2$, that is,
$$2^*=\left\{
 \begin{array}{ll}
 \frac{2N}{N-2}, & \hbox{ if }N\geq 3 \\
  +\infty, & \hbox{ if }N=2.
 \end{array}
\right.
$$

Now we introduce the hypotheses on the perturbation $f(z,x)$.

\smallskip
$H_1:$ $f:\Omega\times\RR\to\RR$ is a measurable function such that for a.a. $z\in\Omega$, $f(z,0)=0$, $f(z,\cdot)\in C^1(\RR)$ and
\begin{itemize}
  \item[$(i)$] $|f'_x(z,x)|\leq a(z)[1+|x|^{r-2}]$ for a.a. $z\in\Omega$ and all $x\in\RR$, with $a\in L^\infty(\Omega)$, $2<r<2^*$;
  \item[$(ii)$] if $F(z,x)=\displaystyle{\int_0^x f(z,s)ds}$, then $\displaystyle{\lim_{x\to\pm\infty}\frac{F(z,x)}{x^2}=+\infty}$ uniformly for a.a. $z\in\Omega$;
  \item[$(iii)$] there exists $\tau\in\left((r-2)\max\left\{1,\frac{N}{2}\right\},2^*\right)$ such that
$$
0<\hat{\beta}_0\leq\liminf_{x\to\pm\infty}\frac{f(z,x)x-2F(z,x)}{|x|^\tau} \mbox{ uniformly for a.a. }z\in\Omega;
$$
  \item[$(iv)$] $f'_x(z,0)=\displaystyle{\lim_{x\to 0}\frac{f(z,x)}{x}=0}$ uniformly for a.a. $z\in\Omega$;
  \item[$(v)$] there exist $C^*,\delta>0$ and $q>2$ such that $F(z,x)\geq-C^* |x|^q$ for a.a. $z\in\Omega$, all $x\in\RR$ and $0\leq f(z,x)x$ for a.a. $z\in\Omega$ and all $0\leq |x|\leq \delta_0$;
  \item[$(vi)$] there exist constants $C_-<0<C_+$ and $m\in\NN$, $m\geq2$ such that
$$
[\hat{\lambda}_{m+1}-\xi(z)]C_++f(z,C_+)\leq0\leq[\hat{\lambda}_{m+1}-\xi(z)]C_- +f(z,C_-) \mbox{ for a.a. }z\in\Omega;
$$
  \item[$(vii)$] for every $\rho>0$, there exists $\hat{\xi}_\rho>0$ such that for a.a. $z\in\Omega$, the function $x\mapsto f(z,x)+\hat{\xi}_\rho x$ is nondecreasing on $[-\rho,\rho]$.
\end{itemize}

\smallskip
{\em Remarks.} Hypotheses $H_1(ii),(iii)$ imply that
$$
\lim_{x\to\pm \infty}\frac{f(z,x)}{x}=\pm\infty \mbox{ uniformly for a.a. }z\in\Omega.
$$
Hence  the function $f(z,\cdot)$ is superlinear for a.a. $z\in\Omega$. However, this superlinearity of the perturbation term is not expressed using the $AR$-condition, which is common in the literature when dealing with superlinear problems. Recall that the $AR$-condition says that there exist $q>2$ and $M>0$ such that
\begin{equation}\tag{\mbox{$5a$}}
    0<q F(z,x)\leq f(z,x)x \mbox{ for a.a. }z\in\Omega \mbox{ and all }|x|\geq M
\end{equation}
\begin{equation}\tag{\mbox{$5b$}}
    0<\underset{\Omega}{{\rm essinf}}\, F(\cdot,\pm M)
\end{equation}
(see Mugnai \cite{6Mug}). Integrating $(5a)$ and using $(5b)$, we obtain the following weaker condition
\begin{eqnarray*}
  && C_0|x|^q\leq F(z,x) \mbox{ for a.a. }z\in\Omega\mbox{ and all }|x|\geq M, \\
  &\Rightarrow& C_0|x|^q\leq f(z,x)x \mbox{ for a.a. }z\in\Omega \mbox{ and all }|x|\geq M \mbox{ (see (5a)). }
\end{eqnarray*}

So we see that the $AR$-condition implies that $f(z,\cdot)$ has at least $(q-1)$-polynomial growth. In this paper, instead of the $AR$-condition, we shall employ the less restrictive condition $H_1(iii)$, which allows the consideration of superlinear nonlinearities with ``slower" growth near $\pm\infty$, which fail to satisfy the $AR$-condition. The following example illustrates this fact. For the sake of simplicity, we shall drop the $z$-dependence of $f$ and assume that $\xi\in L^\infty(\Omega)$. Suppose that for some $m\in\NN$, we have $C\geq|\hat{\lambda}_{m+2}|+\|\xi\|_\infty$, $C>0$. Then the function
$$
f(x)=\left\{
       \begin{array}{ll}
         x-(C+1)|x|^{q-2}x, & \hbox{ if }|x|\leq1\ (2<q) \\
         x\ln|x|-Cx, & \hbox{ if } 1<x
       \end{array}
     \right.
$$
satisfies hypotheses $H_1$ but fails to satisfy the $AR$-condition.

For all $\lambda>0$, let  $\varphi_\lambda: H^1(\Omega)\to \RR$  denote the energy functional associated to problem $(P_\lambda)$, which is defined by
$$
\varphi_\lambda(u)=\frac{1}{2}\gamma(u)-\frac{\lambda}{2}\|u\|_2^2-\int_\Omega F(z,u)dz \mbox{ for all }u\in H^1(\Omega).
$$

We have $\varphi_\lambda\in C^2(H^1(\Omega))$.

\section{Constant sign solutions}

In this section we shall prove the existence of four nontrivial smooth constant sign solutions when $\lambda\in(\hat{\lambda}_m,\hat{\lambda}_{m+1})$.
\begin{prop}\label{prop1}
If hypotheses $H_0$, $H_1$ hold and $\hat{\lambda}_m<\lambda<\hat{\lambda}_{m+1}$ (see $H_1(vi)$), then problem $(P_\lambda)$ has at least four nontrivial solutions of constant sign
\begin{eqnarray*}
   && u_0,\hat{u}\in{\rm int}\,C_+,\ u_0\not=\hat{u}, \\
   && v_0,\hat{v}\in-{\rm int}\,C_+,\ v_0\not=\hat{v}.
\end{eqnarray*}
\end{prop}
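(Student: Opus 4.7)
By symmetry I describe only the construction of $u_0,\hat u\in{\rm int}\,C_+$; the negative pair $v_0,\hat v\in-{\rm int}\,C_+$ is produced by the identical argument run through the constant $C_-<0$ from $H_1(vi)$. The bilateral inequality in $H_1(vi)$ combined with $\lambda<\hat\lambda_{m+1}$ makes $\overline u\equiv C_+$ a strict supersolution of $(P_\lambda)$: from $[\hat\lambda_{m+1}-\xi]C_++f(\cdot,C_+)\le 0$ one gets $-\Delta C_++\xi C_+-\lambda C_+-f(\cdot,C_+)\ge (\hat\lambda_{m+1}-\lambda)C_+>0$. On the other hand, $m\ge 2$ forces $\hat\lambda_1<\hat\lambda_m<\lambda$; combined with $H_1(iv)$ (so $f(z,x)=o(x)$ at $0$), this shows that $\underline u:=\epsilon\hat u_1\in{\rm int}\,C_+$ is a strict subsolution as soon as $\epsilon>0$ is small enough that also $\underline u\le\overline u$ pointwise.

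\medskip
\noindent\textbf{First positive solution.} I truncate the reaction $(z,x)\mapsto\lambda x+f(z,x)$ outside the order interval $[\underline u,\overline u]$ and consider the associated $C^1$-functional $\hat\varphi_\lambda^+$. Using the coercivity estimate~(2) together with the boundedness of the truncated reaction, standard direct-method arguments yield a global minimizer $u_0$ of $\hat\varphi_\lambda^+$. Testing the Euler equation against $(\underline u-u_0)^+$ and $(u_0-\overline u)^+$ localizes $u_0\in[\underline u,\overline u]$; on this interval the truncation is transparent, so $u_0$ actually solves $(P_\lambda)$. The regularity theorem of Wang~\cite{17Wan} gives $u_0\in C^1(\overline\Omega)$ and the Hopf lemma places $u_0\in{\rm int}\,C_+$.

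\medskip
\noindent\textbf{Second positive solution.} Because $u_0\in{\rm int}_{C^1(\overline\Omega)}[\underline u,\overline u]$ and $\varphi_\lambda$ coincides with $\hat\varphi_\lambda^+$ on this $C^1$-neighborhood, $u_0$ is a local minimizer of $\varphi_\lambda$ in the $C^1(\overline\Omega)$-topology; the Brezis--Nirenberg-type upgrade applicable to this Robin setting then promotes $u_0$ to a local minimizer of $\varphi_\lambda$ in $H^1(\Omega)$. To guarantee positivity of a second critical point I freeze the reaction below $u_0$: set $\tilde f_+(z,x)=f(z,u_0(z))$ for $x\le u_0(z)$ and $\tilde f_+(z,x)=f(z,x)$ for $x\ge u_0(z)$. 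Weak comparison forces any critical point of the associated functional $\tilde\varphi_\lambda^+$ to satisfy $u\ge u_0$, hence to lie in ${\rm int}\,C_+$ and solve $(P_\lambda)$. Since $u_0$ remains a local minimizer of $\tilde\varphi_\lambda^+$ while $H_1(ii)$ drives $\tilde\varphi_\lambda^+(u_0+t\hat u_1)\to-\infty$ as $t\to+\infty$, mountain pass geometry is at hand; the mountain pass theorem delivers a critical point $\hat u$ with $\tilde\varphi_\lambda^+(\hat u)>\tilde\varphi_\lambda^+(u_0)$, so that $\hat u\ne u_0$ is the desired second positive solution.

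\medskip
\noindent\textbf{Main obstacle.} The delicate point is verifying the Cerami condition for $\tilde\varphi_\lambda^+$ without recourse to the Ambrosetti--Rabinowitz condition: because $-\Delta+\xi$ has finitely many negative eigenvalues, a Cerami sequence could a priori blow up in $H^1(\Omega)$ along a negative direction of $\gamma$. Normalizing $w_n=u_n/\|u_n\|$, one combines (2) with $H_1(iii)$ to control the balanced quantity $\int_\Omega\bigl(f(z,u_n)u_n-2F(z,u_n)\bigr)\,dz$, and both the alternative $w_n\to 0$ and the alternative $w_n\rightharpoonup w\neq 0$ are thereby excluded, the latter by combining Fatou's lemma with the superlinearity $H_1(ii)$. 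The Kadec--Klee property of $H^1(\Omega)$ finally upgrades weak convergence of the (now bounded) Cerami sequence to strong convergence.
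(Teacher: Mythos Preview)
Your overall strategy is sound but differs from the paper's in one essential respect, and your write-up contains a real technical gap.

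\textbf{The different route.} For the second positive solution the paper truncates the reaction at $0$ (that is, it works with $\varphi_\lambda^+(u)=\tfrac12\gamma(u)+\tfrac{\mu}{2}\|u^-\|_2^2-\tfrac{\lambda}{2}\|u^+\|_2^2-\int_\Omega F(z,u^+)\,dz$), so that $K_{\varphi_\lambda^+}\subseteq{\rm int}\,C_+\cup\{0\}$. The mountain pass critical point $\hat u$ could then a priori equal $0$, and the paper rules this out by computing critical groups: since $\lambda\in(\hat\lambda_m,\hat\lambda_{m+1})$ with $m\ge 2$, one has $C_k(\varphi_\lambda^+,0)=C_k(\varphi_\lambda,0)=\delta_{k,d_m}\ZZ$ with $d_m\ge 2$, whereas $C_1(\varphi_\lambda^+,\hat u)\ne 0$; hence $\hat u\ne 0$. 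Your idea of freezing the reaction below $u_0$ instead of below $0$ is designed to make every critical point automatically $\ge u_0$, thereby bypassing the Morse-theoretic step. This is a legitimate and well-known alternative; it is more elementary here, though the paper's critical-group computation is reused later in Proposition~\ref{prop5}.

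\textbf{The gap.} As you wrote it, however, the comparison fails. You truncate only $f$, keeping the linear term $\lambda u$ intact. Testing the Euler equation of $\tilde\varphi_\lambda^+$ and the equation for $u_0$ against $(u_0-u)^+$ then yields
\[
\gamma\bigl((u_0-u)^+\bigr)=\lambda\,\bigl\|(u_0-u)^+\bigr\|_2^2,
\]
and since $\lambda>\hat\lambda_1$ this does \emph{not} force $(u_0-u)^+=0$ (the quadratic form $\gamma-\lambda\|\cdot\|_2^2$ is indefinite). The remedy is the $\mu$-shift you already invoked in the first step: freeze the full quantity $(\lambda+\mu)x+f(z,x)$ at $x=u_0(z)$. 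Then the same test gives $\gamma((u_0-u)^+)+\mu\|(u_0-u)^+\|_2^2\le 0$, and \eqref{eq2} yields $u\ge u_0$. A second point you pass over is why ``$u_0$ remains a local minimizer of $\tilde\varphi_\lambda^+$'': since $\tilde\varphi_\lambda^+$ and $\varphi_\lambda$ do not agree on any $C^1$-neighbourhood of $u_0$, this is not automatic. One clean fix is to take $\mu$ large enough (using $H_1(vii)$) so that $x\mapsto(\lambda+\mu)x+f(z,x)$ is nondecreasing on $[-\rho,\rho]$ with $\rho>\|u_0\|_\infty$; then the truncated primitive lies below the untruncated one near $u_0$, giving $\tilde\varphi_\lambda^+(u)\ge\varphi_\lambda(u)\ge\varphi_\lambda(u_0)=\tilde\varphi_\lambda^+(u_0)$ for $u$ close to $u_0$ in $C^1$.

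\textbf{On the $C$-condition.} Your sketch conflates two distinct mechanisms. The paper does not normalise: it uses $H_1(iii)$ directly to bound $\{u_n^+\}$ in $L^\tau(\Omega)$ via the ``balanced quantity'' $\int(f(z,u_n^+)u_n^+-2F(z,u_n^+))\,dz$, then interpolates between $L^\tau$ and $L^{2^*}$ (with the choice of $\tau$ in $H_1(iii)$ ensuring $tr<2$) to obtain boundedness in $H^1(\Omega)$. Your alternative via $w_n=u_n/\|u_n\|$ and Fatou also works, but is a separate argument; pick one.
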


\begin{proof}
  Let $\mu>0$ be as in \eqref{eq2} and consider the Carath\'eodory function $g^+_\lambda(z,x)$ defined by
\begin{equation}\label{eq6}
g_\lambda^+(z,x)=\left\{
                   \begin{array}{ll}
                     (\lambda+\mu)x^+ +f(z,x^+), & \hbox{ if } x\leq C_+ \\
                     (\lambda+\mu)C_+ +f(z,C_+), & \hbox{ if } C_+<x.
                   \end{array}
                 \right.
\end{equation}

We set $G_\lambda^+=\displaystyle{\int_0^x g_\lambda^+(z,s)ds}$ and consider the $C^1$-functional $\Psi_\lambda^+:H^1(\Omega)\to\RR$ defined by
$$
\Psi_\lambda^+(u)=\frac{1}{2}\gamma(u)+\frac{\mu}{2}\|u\|_2^2-\int_\Omega G_\lambda^+(z,u)dz \mbox{ for all }u\in H^1(\Omega).
$$

From \eqref{eq2} and \eqref{eq6}, we see that $\Psi_\lambda^+(\cdot)$ is coercive. Next, using the Sobolev embedding theorem and the compactness of the trace map, we see that $\Psi_\lambda^+(\cdot)$ is sequentially weakly lower semicontinuous. So, by the Weierstrass-Tonelli theorem, we can find $u_0\in H^1(\Omega)$ such that
\begin{equation}\label{eq7}
  \Psi_\lambda^+(u_0)=\inf\left\{\Psi_\lambda^+(u):\:u\in H^1(\Omega)\right\}.
\end{equation}

Let $t>0$ be so small that $t\hat{u}_1(z)\leq \min\{C_+,\delta_0\}$ for all $z\in\overline{\Omega}$ (recall that $\hat{u}_1\in{\rm int}\,C_+$). Using \eqref{eq6} and hypothesis $H_1(v)$ we have
\begin{eqnarray*}
  && \Psi_\lambda^+ (t\hat{u}_1)\leq \frac{t^2}{2}[\hat{\lambda}_1-\lambda]<0 \mbox{ (since $\lambda>\hat{\lambda}_1$, $\|\hat{u}_1\|_2=1$), } \\
   &\Rightarrow& \Psi_\lambda^+(u_0)<0=\Psi_\lambda^+(0) \mbox{ (see \eqref{eq7}), } \\
   &\Rightarrow& u_0\not=0.
\end{eqnarray*}

From \eqref{eq7} we have
\begin{eqnarray}
&&(\Psi_\lambda^+)'(u_0)=0,\nonumber \\
  &\Rightarrow& \langle A(u_0),h\rangle+\int_\Omega [\xi(z)+\mu]u_0 hdz+\int_{\partial\Omega}\beta(z) u_0 hd\sigma=\int_\Omega g_\lambda^+(z,u_0)hdz \label{eq8} \\ \nonumber
  && \mbox{ for all }h\in H^1(\Omega).
\end{eqnarray}

In \eqref{eq8} we first  choose $h=-u_0^- \in H^1(\Omega)$. Then
\begin{eqnarray*}
   && \gamma(u_0^-)+\mu\|u_0^-\|_2^2=0 \mbox{ see \eqref{eq6}, } \\
   &\Rightarrow& \hat{C}\|u_0^-\|^2\leq0 \mbox{ (see \eqref{eq2}), } \\
   &\Rightarrow& u_0\geq0,\ u_0\not=0.
\end{eqnarray*}

Next, in \eqref{eq8} we choose $h=(u_0-C_+)^+\in H^1(\Omega)$. We have
\begin{eqnarray*}
   && \langle A(u_0), (u_0-C_+)^+\rangle+\int_\Omega[\xi(z)+\mu]u_0(u_0-C_+)^+dz+\int_{\partial\Omega} \beta(z)u_0 (u_0-C_+)^+ d\sigma \\
   &=& \int_\Omega \left[(\lambda+\mu)C_+ +f(z,C_+)\right](u_0-C_+)^+dz \mbox{ (see \eqref{eq6}) } \\
   &\leq& \int_\Omega \left[(\hat{\lambda}_{m+1}+\mu)C_+ +f(z,C_+)\right](u_0-C_+)^+dz \mbox{ (since $\lambda<\hat{\lambda}_{m+1}$) } \\
   &\leq& \int_\Omega[\xi(z)+\mu]C_+(u_0-C_+)^+dz \mbox{ (see hypotheses $H_1(vi)$), } \\
  \Rightarrow && u_0\leq C_+.
\end{eqnarray*}

So, we have proved that
\begin{equation}\label{eq9}
  u_0\in[0,C_+],\ u_0\not=0.
\end{equation}

It follows from \eqref{eq9}, \eqref{eq6} and \eqref{eq8}  that $u_0$ is a positive solution of problem $(P_\lambda)$ and we have
\begin{equation}\label{eq10}
 \left\{
\begin{array}{lll}
-\Delta u_0(z)+\xi(z)u_0(z)=\lambda u_0(z)+f(z,u_0(z)) \text{ for a.a. } z\in\Omega,\\
\frac{\partial u_0}{\partial n}+\beta(z)u_0=0 \mbox{ on }\partial\Omega
\end{array}
\right.
\end{equation}
(see Papageorgiou \& R\u{a}dulescu \cite{9Pap-Rad}).

We consider the following functions
$$
\hat{\vartheta}_\lambda(z)=\left\{
                             \begin{array}{ll}
                               0, & \hbox{ if }0\leq u_0(z)\leq 1 \\
                               \lambda-\xi(z)+\frac{f(z,u_0(z))}{u_0(z)}, & \hbox{ if } 1<u_0(z)
                             \end{array}
                           \right.
$$
and
$$
\hat{\gamma}_\lambda=\left\{
                                  \begin{array}{ll}
                                    (\lambda-\xi(z))u_0(z)+f(z,u_0(z)), & \hbox{ if }0\leq u_0(z)\leq 1 \\
                                    0, & \hbox{ if }1<u_0(z).
                                  \end{array}
                                \right.
$$

On account of hypotheses $H_0$, we have
\begin{eqnarray*}
  && \hat{\vartheta}_\lambda\in L^s(\Omega)\;(s>N) \mbox{ and }|\hat{\vartheta}_\lambda(z)|\leq|\lambda-\xi(z)|+C_1[1+u_0(z)^{r-1}] \\
   && \mbox{ for a.a. }z\in\Omega \mbox{ and some }C_1>0.
\end{eqnarray*}

If $N\geq3$ (the case $N=2$ is clear since then $2^*=+\infty$), then
$$
(r-2)\frac{N}{2}<\left[\frac{2N}{N-2}-2\right]\frac{N}{2}=\frac{2N}{N-2}=2^*.
$$

Since $u_0\in H^1(\Omega)$, by the Sobolev embedding theorem we have
\begin{eqnarray*}
   && u_0^{(r-2)N/2}\in L^1(\Omega) \\
  &\Rightarrow& \hat{\vartheta}_\lambda\in L^{\frac{N}{2}}(\Omega).
\end{eqnarray*}

From \eqref{eq10} we have
$$
\left\{
\begin{array}{lll}
-\Delta u_0(z)=\hat{\vartheta}_\lambda(z)u_0(z)+\hat{\gamma}_\lambda(z) \text{ for a.a. } z\in\Omega,\\
\frac{\partial u_0}{\partial n}+\beta(z)u_0=0 \mbox{ on }\partial\Omega.
\end{array}
\right.
$$

Using Lemma 5.1 of Wang \cite{17Wan}, we obtain that
$$
u_0\in L^\infty(\Omega).
$$
Then the Calderon-Zygmund estimates (see Lemma 5.2 of Wang \cite{17Wan}) imply that $u_0\in W^{2,s}(\Omega)$. By the Sobolev embedding theorem we have $W^{2,s}(\Omega)\hookrightarrow C^{1,\alpha}(\overline{\Omega})$ with $\alpha=1-\frac{N}{s}>0$. So, $u_0\in C^{1,\alpha}(\overline{\Omega})$.

Let $\rho=\|u\|_\infty$ and let $\hat{\xi}_\rho>0$ be as postulated by hypothesis $H_1(vii)$. From \eqref{eq10} we have
\begin{eqnarray*}
   && \Delta u_0(z)\leq (\|\xi^+\|_\infty+\hat{\xi}_\rho)u_0(z) \mbox{ for a.a. }z\in\Omega \\
   && \mbox{ (see hypotheses $H_0$), } \\
   &\Rightarrow& u_0\in {\rm int}\,C_+ \mbox{ (by the maximum principle). }
\end{eqnarray*}

Evidently, choosing $\hat{\xi}_\rho>0$ even bigger if necessary, we can deduce that for a.a. $z\in\Omega$, the function
$$
x\mapsto[\lambda+\hat{\xi}_\rho]x+f(z,x)
$$
is nondecreasing on $[-\rho,\rho]$ ($\rho=\|u_0\|_\infty$). We have
\begin{eqnarray}\nonumber
  && -\Delta u_0(z)+[\xi(z)+\hat{\xi}_\rho]u_0(z) \\ \nonumber
  && =[\lambda+\hat{\xi}_\rho]u_0(z)+f(z,u_0(z)) \\ \nonumber
  && \leq[\lambda+\hat{\xi}_\rho]C_+ +f(z,C_+) \mbox{ (see \eqref{eq9}) } \\ \nonumber
  && \leq[\xi(z)+\hat{\xi}_\rho]C_+ \mbox{ for a.a. }z\in\Omega \mbox{ (see hypothesis $H_1(vi)$), } \\ \nonumber
  &\Rightarrow& \Delta(C_+ -u_0)(z)\leq \left[\|\xi^+\|_\infty+\hat{\xi}_\rho\right](C_+-u_0(z)) \mbox{ for a.a. }z\in\Omega, \\ \nonumber
  &\Rightarrow& C_+-u_0\in{\rm int}\,C_+, \\
  &\Rightarrow& u_0\in{\rm int}_{C^1(\overline{\Omega})}[0, C_+]. \label{eq11}
\end{eqnarray}

Let $\varphi^+_\lambda: H^1(\Omega)\to \RR$ be the $C^1$-functional defined by
$$
\varphi_\lambda^+=\frac{1}{2}\gamma(u)+\frac{\mu}{2}\|u^-\|_2^2-\frac{\lambda}{2}\|u^+\|_2^2-\int_\Omega F(z,u^+)dz \mbox{ for all }u\in  H^1(\Omega).
$$

From \eqref{eq6} it is clear that
\begin{eqnarray*}
  && \varphi_\lambda^+|_{[0,C_+]}=\Psi_\lambda^+|_{[0,C_+]} \\
  &\Rightarrow& u_0 \mbox{ is a local $C^1(\overline{\Omega})$-minimizer of $\varphi_\lambda^+$ (see \eqref{eq11}), } \\
  &\Rightarrow& u_0 \mbox{ is a local $ H^1(\Omega)$-minimizer of $\varphi_\lambda^+$ } \\
  && \mbox{ (see Papageorgiou \& R\u{a}dulescu \cite{9Pap-Rad}).}
\end{eqnarray*}

It is easy to see that
\begin{eqnarray*}
  && K_{\varphi_\lambda^+}\subseteq C_+ \mbox{ (regularity theory), } \\
  &\Rightarrow& K_{\varphi_\lambda^+}\subseteq{\rm int}\,C_+ \cup\{0\} \mbox{ (maximum principle). }
\end{eqnarray*}

So, we may assume that $K_{\varphi_\lambda^+}$ is finite. Otherwise we already have an infinity of positive smooth solutions and we are done. Then on account of Theorem 5.7.6 of Papageorgiou, R\u{a}dulescu \& Repov\v{s} \cite[p. 449]{13Pap-Rad-Rep}, we can find $\rho_0\in(0,1)$ so small  that
\begin{equation}\label{eq12}
  \varphi_\lambda^+(u_0)<\inf\left\{\varphi_\lambda^+(u):\:\|u-u_0\|=\rho_0\right\}=m_\lambda^+.
\end{equation}

Hypothesis $H_1(ii)$ implies that
\begin{equation}\label{eq13}
  \varphi_\lambda^+(t\hat{u}_1)\to-\infty \mbox{ as }t\to+\infty.
\end{equation}

\smallskip {\it
Claim}. The functional $\varphi_\lambda^+$ satisfies the $C$-condition.

\smallskip
Consider a sequence $\{u_n\}_{n\geq1}\subseteq  H^1(\Omega)$ such that
\begin{eqnarray}
  && |\varphi_\lambda^+(u_n)|\leq C_2 \mbox{ for some }C_2>0\mbox{ and all }n\in\NN, \label{eq14} \\
  && (1+\|u_n\|)(\varphi_\lambda^+)'(u_n)\to0 \mbox{ in }  H^1(\Omega)^* \mbox{ as }n\to\infty. \label{eq15}
\end{eqnarray}

From \eqref{eq15} we have
\begin{eqnarray}\nonumber
 \Big|\langle A(u_n),h\rangle+\int_\Omega \xi(z)u_n hdz+\int_{\partial\Omega} \beta(z)u_n hd\sigma\!\!\!\! &-&\!\!\!\!\int_\Omega \mu u_n^- hdz-\int_\Omega[\lambda u_n^*+f(z,u_n^+)]hdz \Big| \\
   &\leq& \frac{\varepsilon_n\|h\|}{1+\|u_n\|} \label{eq16} \\ \nonumber
   && \mbox{ for all }h\in  H^1(\Omega),\mbox{ with }\varepsilon_n\to0^+.
\end{eqnarray}

In \eqref{eq16} we choose $h=-u_n^-\in  H^1(\Omega)$. Then
\begin{eqnarray}\nonumber
  && \gamma(u_n^-)+\mu\|u_n^-\|_2^2\leq \varepsilon_n \mbox{ for all }n\in\NN, \\ \nonumber
  &\Rightarrow& \hat{C}\|u_n^-\|^2\leq \varepsilon_n \mbox{ for all }n\in\NN \mbox{ (see \eqref{eq2}), } \\
  &\Rightarrow& u_n^-\to0 \mbox{ in } H^1(\Omega) \mbox{ as }n\to\infty. \label{eq17}
\end{eqnarray}

Next, we choose $h=u_n^+ \in  H^1(\Omega)$ in \eqref{eq16}. We obtain
\begin{equation}\label{eq18}
  -\gamma(u_n^+)+\int_\Omega[\lambda(u_n^+)^2+f(z,u_n^+)u_n^+]dz\leq \varepsilon_n \mbox{ for all }n\in\NN.
\end{equation}

On the other hand, from \eqref{eq14} and \eqref{eq17}, we have
\begin{equation}\label{eq19}
  \gamma(u_n^+)-\int_\Omega[\lambda(u_n^+)^2+2F(z,u_n^+)]dz\leq C_3 \mbox{ for some }C_3>0 \mbox{ and all }n\in\NN.
\end{equation}

We add \eqref{eq18} and \eqref{eq19} and obtain
\begin{equation}\label{eq20}
  \int_\Omega [f(z,u_n^+)u_n^+-2F(z,u_n^+)]dz\leq C_4 \mbox{ for some } C_4>0 \mbox{ and all }n\in\NN.
\end{equation}

Hypotheses $H_1(i),(iii)$ imply that we can find $\hat{\beta}_1\in(0,\hat{\beta}_0)$ and $C_5>0$ such that
\begin{equation}\label{eq21}
  \hat{\beta}_1|x|^\tau-C_5\leq f(z,x)x-2F(z,x) \mbox{ for a.a. }z\in\Omega\mbox{ and all }x\in\RR.
\end{equation}

We use \eqref{eq21} in \eqref{eq20} and conclude that
\begin{equation}\label{eq22}
  \{u_n^+\}_{n\geq1}\subseteq L^\tau(\Omega) \mbox{ is bounded. }
\end{equation}

First, assume that $N\geq3$. From hypothesis $H_1(iii)$ we see that without any loss of generality, we may assume that $\tau<r<2^*$. So, we can find $t\in(0,1)$ such that
\begin{equation}\label{eq23}
  \frac{1}{r}=\frac{1-t}{\tau}+\frac{t}{2^*}.
\end{equation}

From the interpolation inequality (see Proposition 2.3.17 of Papageorgiou \& Winkert \cite[p. 116]{14Pap-Win}), we have
\begin{eqnarray}\nonumber
  && \|u_n^+\|_r\leq \|u_n^+\|_\tau^{1-t}\|u_n^+\|_{2^*}^t \\
  &\Rightarrow& \|u_n^+\|_r^r\leq C_6\|u_n^+\|^{tr} \mbox{ for some } C_6>0 \mbox{ and all }n\in\NN \label{eq24} \\ \nonumber
  && \mbox{ (see \eqref{eq22} and use the Sobolev embedding theorem). }
\end{eqnarray}

From hypothesis $H_1(i)$ we have
\begin{equation}\label{eq25}
  f(z,x)x\leq C_7[1+x^r] \mbox{ for a.a. }z\in\Omega, \mbox{ all }x\geq0\mbox{ and some }C_7>0.
\end{equation}

In \eqref{eq16} we choose $h=u_n^+\in H^1(\Omega)$. Then
\begin{eqnarray}\nonumber
  \gamma(u_n^+)+\mu\|u_n^+\|_2^2 &\leq& [\lambda+\mu]\|u_n^+\|_2^2+\int_\Omega f(z,u_n^+)u_n^+dz+\varepsilon_n \\ \nonumber
   &\leq& \left[|\lambda|+\mu\right]\|u_n^+\|_2^2+C_8[1+\|u_n^+\|^{tr}] \\ \nonumber
   && \mbox{ for some }C_8>0 \mbox{ (see \eqref{eq25} and \eqref{eq24}) } \\ \nonumber
   &\leq& C_9\left[1+\|u_n^+\|^{tr}\right] \\ \nonumber
   && \mbox{ for some }C_9>0 \mbox{ (recall that $2\leq \tau$ and see \eqref{eq22}), } \\
  \Rightarrow\hat{C}\|u_n^+\|^2 &\leq& C_9\left[1+\|u_n^+\|^{tr}\right] \mbox{ for all }n\in\NN. \label{eq26}
\end{eqnarray}

Using \eqref{eq23} and the fact that $\tau>(r-2)\frac{N}{2}$ (see hypothesis $H_1(iii)$ and recall that $N\geq3$), we see that $tr<2$. So, it follows from \eqref{eq26}  that
\begin{eqnarray}\nonumber
  && \{u_n^+\}_{n\geq1}\subseteq H^1(\Omega) \mbox{ is bounded, } \\
  &\Rightarrow& \{u_n\}_{n\geq1}\subseteq H^1(\Omega) \mbox{ is bounded (see \eqref{eq17}). } \label{eq27}
\end{eqnarray}

We may assume that
\begin{equation}\label{eq28}
  u_n\overset{w}{\to}u \mbox{ in }H^1(\Omega) \mbox{ as }n\to\infty.
\end{equation}

In \eqref{eq16} we choose $h=u_n-u \in H^1(\Omega)$, pass to the limit as $n\to\infty$ and use \eqref{eq24}, the Sobolev embedding theorem and the compactness of the trace map. We obtain
\begin{eqnarray}\nonumber
  && \lim_{n\to\infty}\langle A(u_n),u_n-u\rangle=0, \\
  &\Rightarrow& \|Du_n\|_2\to\|Du\|_2. \label{eq29}
\end{eqnarray}

Invoking \eqref{eq28}, \eqref{eq29} and the Kadec-Klee property of $H^1(\Omega)$, we infer that
\begin{equation}\label{eq30}
  u_n\to u \mbox{ in }H^1(\Omega) \mbox{ as }n\to\infty.
\end{equation}

This proves that $\varphi_\lambda^+$ satisfies the $C$-condition when $N\geq3$.

If $N=2$, then $2^*=+\infty$ and by the Sobolev embedding theorem, we have $H^1(\Omega)\hookrightarrow L^\eta(\Omega)$ compactly for all $1\leq\eta<\infty$. Then the previous argument works if we replace $2^*(=+\infty)$ with $\eta>r>\tau$. We choose $t\in(0,1)$ such that
\begin{eqnarray*}
  && \frac{1}{r}=\frac{1-t}{\tau}+\frac{t}{\eta} \\
  &\Rightarrow& tr=\frac{\eta(r-t)}{\eta-\tau}, \\
  &\Rightarrow& tr\to r-\tau \mbox{ as }\eta\to+\infty \mbox{ and } r-\tau<2 \mbox{ (see $H_1(iii)$). }
\end{eqnarray*}

So, we choose $\eta>r$ big enough so that $tr<2$ and reasoning as above, we obtain \eqref{eq27} and invoking the Kadec-Klee property, we again reach \eqref{eq30}. We conclude that $\varphi_\lambda^+$ satisfies the $C$-condition. This proves the claim.

Then \eqref{eq12}, \eqref{eq13} and the claim, permit the use of the mountain pass theorem. So, we can find $\hat{u}\in H^1(\Omega)$ such that
\begin{equation}\label{eq31}
  \hat{u}\in K_{\varphi_\lambda^+}\subseteq{\rm int}\,C_+\cup\{0\} \mbox{ and } m_\lambda^+\leq \varphi_\lambda^+(\hat{u}) \mbox{ (see\eqref{eq12}). }
\end{equation}

It follows from \eqref{eq12} and \eqref{eq31}  that $\hat{u}\not=u_0$. If we show that $\hat{u}\not=0$, then this will be the second positive solution of $(P_\lambda)$.

On account of hypotheses $H_1(i),(iv)$, we have
\begin{equation}\label{eq32}
  |f(z,x)|\leq C_{10}[|x|+|x|^{r-1}] \mbox{ for a.a. }z\in\Omega, \mbox{ all }x\in\RR\mbox{ and some } C_{10}>0.
\end{equation}

We have
\begin{eqnarray}\nonumber
  |\varphi_\lambda(u)-\varphi_\lambda^+(u)| &\leq& \frac{\mu+|\lambda|}{2}\|u_n^-\|_2^2+\int_\Omega |F(z,-u^-)|dz \\
   &\leq& C_{11}\left[\|u\|^2+\|u\|^r\right] \mbox{ for some }C_{11}>0 \mbox{ (see \eqref{eq32}). }\label{eq33}
\end{eqnarray}

Also, for $h\in H^1(\Omega)$ we have
\begin{eqnarray}\nonumber
  && \left|\langle \varphi'_\lambda(u)-(\varphi_\lambda^+)'(u),h\rangle\right|\leq C_{12}\left[\|u\|+\|u\|^{r-1}\right]\|h\| \mbox{ for some }C_{12}>0, \\
  &\Rightarrow& \|\varphi'_\lambda(u)-(\varphi_\lambda^+)'(u)\|_{H^1(\Omega)^*}\leq C_{12}\left[\|u\|+\|u\|^{r-1}\right]. \label{eq34}
\end{eqnarray}

From \eqref{eq33}, \eqref{eq34} and the $C^1$-continuity of critical groups (see Theorem 6.3.4 of Papageorgiou, R\u{a}dulescu \& Repov\v{s} \cite[p. 503]{13Pap-Rad-Rep}), we have
\begin{equation}\label{eq35}
  C_k(\varphi_\lambda,0)=C_k(\varphi_\lambda^+,0) \mbox{ for all }k\in\NN_0.
\end{equation}

By hypothesis, $\lambda\in(\hat{\lambda}_m,\hat{\lambda}_{m+1})$ and $m\geq2$. So, $u=0$ is a nondegenerate critical point of $\varphi_\lambda$ with Morse index $d_m=\dim \overline{H}_m\geq2$ (since $m\geq2$). Hence by Proposition 6.2.6 of \cite[p. 479]{13Pap-Rad-Rep}, we have
\begin{eqnarray}\nonumber
  && C_k(\varphi_\lambda,0)=\delta_{k,d_m}\ZZ\mbox{ for all }k\in\NN_0, \\
  &\Rightarrow& C_k(\varphi_\lambda^+,0)=\delta_{k,d_m}\ZZ \mbox{ for all }k\in\NN_0 \mbox{ (see \eqref{eq35}). } \label{eq36}
\end{eqnarray}

On the other hand, from the previous part of the proof we know that $\hat{u}\in K_{\varphi_\lambda^+}$ is of mountain pass type. Therefore Theorem 6.5.8 of Papageorgiou, R\u{a}dulescu \& Repov\v{s} \cite[p. 527]{13Pap-Rad-Rep} implies that
\begin{equation}\label{eq37}
  C_1(\varphi_\lambda^+,\hat{u})\not=0.
\end{equation}

By \eqref{eq37}, \eqref{eq36} and since $d_m\geq2$, we can conclude that $\hat{u}\not=0$ and so $\hat{u}\in{\rm int}\,C_+$ is the second positive solution of $(P_\lambda)$ distinct from $u_0$.

For the negative solutions, we consider the Carath\'eodory function $g_\lambda^-(z,x)$ defined by
$$
g_\lambda^-(z,x)=\left\{
                   \begin{array}{ll}
                     (\lambda+\mu)C_-+f(z,C_-), & \hbox{ if }x\leq C_- \\
                     (\lambda+\mu)(-x^-)+f(z,-x^-), & \hbox{ if }C_-<x.
                   \end{array}
                 \right.
$$

We set $G_\lambda^-(z,x)=\int_0^x g_\lambda^-(z,s)ds$ and consider the $C^1$-functionals $\Psi^-_\lambda,\varphi_\lambda^-: H^1(\Omega)\to\RR$ defined by
\begin{eqnarray*}
  \Psi_\lambda^-(u) &=& \frac{1}{2}\gamma(u)+\frac{\mu}{2}\|u\|_2^2-\int_\Omega G_\lambda^-(z,u)dz \\
  \mbox{and }\varphi_\lambda^-(u) &=& \frac{1}{2}\gamma(u)+\frac{\mu}{2}\|u^+\|_2^2-\frac{\lambda}{2}\|u^-\|_2^2-\int_\Omega F(z,-u^-)dz
\end{eqnarray*}
for all $u\in H^1(\Omega)$.

Working with these two functionals as above, we produce two negative solutions $v_0,\hat{v}\in-{\rm int}\,C_+$, $v_0\not=\hat{v}$.
\end{proof}

\section{Nodal solutions}

In this section we show that when $\lambda$ is close to $\hat{\lambda}_{m+1}$ (near resonance) we can generate two nodal (sign-changing) solutions.

\begin{prop}\label{prop2}
If hypotheses $H_0$, $H_1$ hold and $\lambda\in(\hat{\lambda}_m,\hat{\lambda}_{m+1})$ (see $H_1(vi)$), then we can find $\hat{\delta}>0$ such that for all $\lambda\in(\hat{\lambda}_{m+1}-\hat{\delta},\hat{\lambda}_{m+1})$ problem $(P_\lambda)$ has at least two nodal solutions $y_0,\hat{y}\in C^1(\overline{\Omega})$.
\end{prop}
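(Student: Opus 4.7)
The plan is to first locate extremal constant sign solutions $u_* \in \text{int}\,C_+$ and $v_* \in -\text{int}\,C_+$ (the smallest positive and largest negative), then minimize a truncated energy associated with the order interval $[v_*, u_*]$ to obtain a first nodal solution $y_0$, and finally use Morse-theoretic arguments inside $[v_*, u_*]$ (where the near-resonance threshold $\hat{\delta}$ is genuinely exploited) to extract a second nodal solution $\hat{y}$. Starting from the solutions of Proposition~\ref{prop1}, one shows that the set of positive solutions of $(P_\lambda)$ lying in $[0, u_0]$ is downward directed (given two such candidates, solve an auxiliary coercive minimization bounded above by their minimum); the ingredients are hypothesis $H_1(vii)$ and the strong maximum principle for $-\Delta + (\xi + \hat{\xi}_\rho)I$. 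Zorn's lemma then delivers $u_*$, and symmetrically $v_*$.

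With $\mu > 0$ as in \eqref{eq2}, I truncate $\lambda x + f(z, x)$ outside $[v_*(z), u_*(z)]$ by freezing it at the boundary values:
$$
\hat{k}_\lambda(z, x) = \begin{cases} (\lambda + \mu) v_*(z) + f(z, v_*(z)), & x \leq v_*(z), \\ (\lambda + \mu) x + f(z, x), & v_*(z) < x < u_*(z), \\ (\lambda + \mu) u_*(z) + f(z, u_*(z)), & u_*(z) \leq x. \end{cases}
$$
With $\hat{K}_\lambda(z, x) = \int_0^x \hat{k}_\lambda(z, s)ds$ and $\hat{\varphi}_\lambda(u) = \frac{1}{2}\gamma(u) + \frac{\mu}{2}\|u\|_2^2 - \int_\Omega \hat{K}_\lambda(z, u)dz$, paralleling \eqref{eq6}--\eqref{eq9} shows that $\hat{\varphi}_\lambda$ is coercive and sequentially weakly lower semicontinuous, and that its critical points lie in $[v_*, u_*]$ and solve $(P_\lambda)$. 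The direct method supplies a global minimizer $y_0 \in H^1(\Omega)$.

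To exclude $y_0 = 0$, pick $\tilde{u} \in E(\hat{\lambda}_m) \cap C^1(\overline{\Omega}) \setminus \{0\}$, which is nodal by \eqref{eq4} and the orthogonality of the eigenspaces (since $m \geq 2$). For $t > 0$ small enough that $t\tilde{u} \in [v_*, u_*]$ pointwise and $\|t\tilde{u}\|_\infty \leq \delta_0$ (possible because $u_*, -v_* \in \text{int}\,C_+$ and $\tilde{u} \in C^1(\overline{\Omega})$), the truncation is inactive and
$$
\hat{\varphi}_\lambda(t\tilde{u}) = \frac{t^2}{2}(\hat{\lambda}_m - \lambda)\|\tilde{u}\|_2^2 - \int_\Omega F(z, t\tilde{u})dz \leq \frac{t^2}{2}(\hat{\lambda}_m - \lambda)\|\tilde{u}\|_2^2 < 0,
$$
where the first inequality uses $H_1(v)$ (giving $F \geq 0$ on the relevant range) and the second uses $\lambda > \hat{\lambda}_m$. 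Hence $\hat{\varphi}_\lambda(y_0) \leq \hat{\varphi}_\lambda(t\tilde{u}) < 0 = \hat{\varphi}_\lambda(0)$, so $y_0 \neq 0$. Extremality of $u_*, v_*$ now forbids $y_0$ from being of constant sign, so $y_0$ is nodal, and Wang's regularity theory (as in the passage between \eqref{eq10} and \eqref{eq11}) upgrades $y_0$ to $C^1(\overline{\Omega})$.

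For the second nodal solution, the comparison argument of \eqref{eq11} yields $y_0 \in \text{int}_{C^1(\overline{\Omega})}[v_*, u_*]$; since $\hat{\varphi}_\lambda$ and $\varphi_\lambda$ differ on $[v_*, u_*]$ only by an inert quadratic, $y_0$ is a local $C^1(\overline{\Omega})$-minimizer and, by \cite{9Pap-Rad}, a local $H^1$-minimizer of $\varphi_\lambda$, so $C_k(\varphi_\lambda, y_0) = \delta_{k, 0}\ZZ$. Coupling this with $C_k(\varphi_\lambda, 0) = \delta_{k, d_m}\ZZ$ from \eqref{eq36} (with $d_m \geq 2$), the local-minimum critical groups at $u_*, v_*$, the mountain-pass groups at $\hat{u}, \hat{v}$, and $C_k(\hat{\varphi}_\lambda, \infty) = \delta_{k, 0}\ZZ$ (coercivity), the Morse identity of \cite[Chapter~6]{13Pap-Rad-Rep} cannot be balanced by $\{0, u_*, v_*, y_0\}$ alone and forces at least one additional critical point $\hat{y} \in H^1(\Omega)$, which by extremality of $u_*, v_*$ and distinctness from $y_0$ must again be nodal; Wang's theory places it in $C^1(\overline{\Omega})$. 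The threshold $\hat{\delta}$ is calibrated by the fine near-resonance analysis of $\varphi_\lambda$ along the eigenspace $E(\hat{\lambda}_{m+1})$, which becomes ``nearly critical'' as $\lambda \uparrow \hat{\lambda}_{m+1}$, and this quantitative control is what ensures the new critical group at $\hat{y}$ cannot be absorbed by $y_0$ or by the already-known constant-sign critical points. This last step --- the precise bookkeeping of critical groups under near-resonance so as to separate $\hat{y}$ from $\{u_0, \hat{u}, v_0, \hat{v}, y_0\}$ while preserving its nodal character --- is the main obstacle of the proof.
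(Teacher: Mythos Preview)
Your approach is entirely different from the paper's, and the second half of it does not close.

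The paper's proof is short and relies on an external bifurcation result. It invokes Proposition~2.3 of Rabinowitz, Su \& Wang \cite{15Rab-Su-Wan} (based on Rabinowitz's abstract bifurcation theorem), which directly furnishes $\delta_1>0$ and, for every $\lambda\in(\hat{\lambda}_{m+1}-\delta_1,\hat{\lambda}_{m+1})$, two nontrivial solutions $y_0,\hat{y}$ that are \emph{small} (they bifurcate from $(\hat{\lambda}_{m+1},0)$). Regularity puts them in $C^1(\overline{\Omega})$. The nodal character is then obtained by a one-line test: because of $H_1(iv)$ and the smallness of $w\in\{y_0,\hat{y}\}$, one may shrink $\hat{\delta}\le\delta_1$ so that $|f(z,w(z))|\le\varepsilon|w(z)|$ with $\varepsilon<\tfrac{\lambda-\hat{\lambda}_1}{2}$; if $w\in{\rm int}\,C_+$, pairing the equation for $w$ against $\hat{u}_1$ via Green's identity gives $\hat{\lambda}_1\int_\Omega w\hat{u}_1\,dz>\hat{\lambda}_1\int_\Omega w\hat{u}_1\,dz$, a contradiction. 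That is the whole argument, and it is where $\hat{\delta}$ genuinely enters.

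Your extremal-solutions-plus-truncation route for the \emph{first} nodal solution is a standard and legitimate alternative, though you have not excluded $y_0\in\{u_*,v_*\}$: extremality only tells you that a constant-sign minimizer would equal $u_*$ or $v_*$, not that this cannot happen. The usual repair is to show $u_*,v_*$ are themselves local minimizers of $\hat{\varphi}_\lambda$ and then run a mountain-pass between them if the global minimizer coincides with one of them.

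The argument for the \emph{second} nodal solution, however, is not a proof. You mix critical groups of $\varphi_\lambda$ and of $\hat{\varphi}_\lambda$: the points $\hat{u},\hat{v}$ are not critical for $\hat{\varphi}_\lambda$ (they lie outside $[v_*,u_*]$ in general), so their ``mountain-pass groups'' have no place in a Morse identity for $\hat{\varphi}_\lambda$. More importantly, a correctly executed Morse count for $\hat{\varphi}_\lambda$ with $K_{\hat{\varphi}_\lambda}\supseteq\{0,u_*,v_*,y_0\}$ and $C_k(\hat{\varphi}_\lambda,\infty)=\delta_{k,0}\ZZ$ would already force an extra critical point for \emph{every} $\lambda\in(\hat{\lambda}_m,\hat{\lambda}_{m+1})$, with no need for any $\hat{\delta}$. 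Your closing sentences (``the threshold $\hat{\delta}$ is calibrated by the fine near-resonance analysis\ldots this last step\ldots is the main obstacle of the proof'') concede that you have not carried this out. In the paper, $\hat{\delta}$ is not a Morse-theoretic artifact at all; it comes from the bifurcation smallness needed to make $|f(\cdot,w)|\le\varepsilon|w|$ hold, and this is also what feeds the critical-group estimate \eqref{eq49} used later in Proposition~\ref{prop5}.
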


\begin{proof}
  By Proposition 2.3 of Rabinowitz, Su \& Wang \cite{15Rab-Su-Wan}, we know that there exists $\delta_1>0$ such that for all $\lambda\in(\hat{\lambda}_{m+1}-\delta_1,\hat{\lambda}_{m+1})$ problem $(P_\lambda)$ has at least two nontrivial solutions $y_0,\hat{y}\in H^1(\Omega)$. As before, using the regularity theory of Wang \cite{17Wan}, we conclude that $y_0,\hat{y}\in C^1(\overline{\Omega})$. Note that the result of Rabinowitz, Su \& Wang \cite{15Rab-Su-Wan} is for Dirichlet problems with $\xi\equiv0$. However, their proof is based on the abstract bifurcation theorem of Rabinowitz (see Theorem 2.1 in \cite{15Rab-Su-Wan}) and so it applies verbatim in our case, too.

We will show that we can have these two solutions $y_0,\hat{y}\in C^1(\overline{\Omega})$  be nodal. From the proof of Proposition 2.3 of Rabinowitz, Su \& Wang \cite{15Rab-Su-Wan} and using hypothesis $H_1(iv)$, we see that given $\varepsilon\in\left(0,\frac{\lambda-\hat{\lambda}_1}{2}\right)$ (recall that $\lambda>\hat{\lambda}_1$), we can find $0<\hat{\delta}\leq \delta_1$ such that
\begin{equation}\label{eq38}
  \lambda\in(\hat{\lambda}_{m+1}-\hat{\delta},\hat{\lambda}_{m+1})\Rightarrow|f(z,w(z))|\leq \varepsilon w(z) \mbox{ for a.a. }z\in\Omega,
\end{equation}
with $w=y_0$ or $w=\hat{y}$. Suppose that $w\in{\rm int}\,C_+$ (the reasoning is similar if $w\in-{\rm int}\,C_+$). We have
\begin{eqnarray*}
   && \hat{\lambda}_1\int_\Omega w\hat{u}_1dz \\
   &=& \langle A(\hat{u}_1), w\rangle+\int_\Omega\xi(z)\hat{u}_1 wdz+\int_{\partial\Omega}\beta(z)\hat{u}_1w d\sigma \\
   &=& \int_\Omega(-\Delta w)\hat{u}_1dz+\int_{\partial\Omega}\frac{\partial w}{\partial n}\hat{u}_1d\sigma+\int_\Omega \xi(z)\hat{u}_1 wdz+\int_{\partial\Omega}\beta(z)\hat{u}_1wd\sigma \\
   && \mbox{ (using Green's identity) } \\
   &=& \int_\Omega[\lambda w-f(z,w)]\hat{u}_1dz \mbox{ (since $w$ is a solution of $(P_\lambda)$) } \\
   &\geq& \int_\Omega \left[\lambda w-\frac{\lambda-\hat{\lambda}_1}{2}w\right]\hat{u}_1 dz \mbox{ (see \eqref{eq38} and recall that $0<\varepsilon\leq \frac{\lambda-\hat{\lambda}_1}{2}$) } \\
   &=& \int_\Omega\frac{\lambda+\hat{\lambda}_1}{2}w\hat{u}_1dz \\
   &>& \hat{\lambda}_1\int_\Omega w\hat{u}_1 dz, \mbox{ a contradiction. }
\end{eqnarray*}

So, $w=y_0$ or $w=\hat{y}$ cannot be constant sign and so $y_0,\hat{y}\in C^1(\overline{\Omega})$ are nodal solutions of $(P_\lambda)$ for $\lambda\in(\hat{\lambda}_{m+1}-\hat{\delta},\hat{\lambda}_{m+1})$.
\end{proof}

\section{The seventh nontrivial solution}

In this section we prove the existence of a seventh nontrivial solution for problem $(P_\lambda)$ when $\lambda\in(\hat{\lambda}_m,\hat{\lambda}_{m+1})$. However, we are unable to provide sign information for this seventh solution.

\begin{prop}\label{prop3}
  If hypotheses $H_0$, $H_1(i),(iv)$ hold and $\lambda<\hat{\lambda}_{m+2}$, then there exists $\rho>0$ such that
\begin{eqnarray*}
  &&\varphi_\lambda|_{\hat{H}_{m+2}\cap\partial B_\rho}\geq \tilde{C}_0>0  \\
  &\mbox{ with }& \hat{H}_{m+2}=\overline{\underset{k\geq m+2}{\oplus}E(\hat{\lambda}_k)}, B_\rho=\{u\in H^1(\Omega):\:\|u\|<\rho\}.
\end{eqnarray*}
\end{prop}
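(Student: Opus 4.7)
The plan is to combine the variational characterization of $\hat\lambda_{m+2}$ on the subspace $\hat H_{m+2}$ with the near-zero decay of $f$ coming from $H_1(iv)$ and the subcritical polynomial growth from $H_1(i)$ to obtain the required local positivity on a small sphere.

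First I would establish a coercivity-type estimate of the form
$$\gamma(u)-\lambda\|u\|_2^2\geq c_0\|u\|^2\quad\text{for all }u\in\hat H_{m+2},$$
for some $c_0>0$. The characterization \eqref{eq4} gives $\gamma(u)\geq\hat\lambda_{m+2}\|u\|_2^2$ on $\hat H_{m+2}$, but this bounds only $\|u\|_2^2$, not the full $H^1$-norm. To upgrade, I would use the convex-combination trick with \eqref{eq2}: for any $t\in(0,1)$,
$$\gamma(u)-\lambda\|u\|_2^2=t\bigl(\gamma(u)+\mu\|u\|_2^2\bigr)+(1-t)\gamma(u)-(\lambda+t\mu)\|u\|_2^2\geq t\hat C\|u\|^2+\bigl[(1-t)\hat\lambda_{m+2}-\lambda-t\mu\bigr]\|u\|_2^2.$$
Since $\hat\lambda_{m+2}+\mu>0$ (this follows by testing \eqref{eq2} on the principal eigenfunction) and $\hat\lambda_{m+2}-\lambda>0$, I can choose $t\in(0,1)$ small enough so that the bracket is nonnegative, and then set $c_0:=t\hat C>0$.

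Next I would control the perturbation term. Hypotheses $H_1(i),(iv)$ yield the standard two-sided growth estimate
$$|f(z,x)|\leq\varepsilon|x|+C_\varepsilon|x|^{r-1}\quad\text{for a.a.\ }z\in\Omega\text{ and all }x\in\RR,$$
for every $\varepsilon>0$, hence $|F(z,x)|\leq\tfrac{\varepsilon}{2}x^2+\tfrac{C_\varepsilon}{r}|x|^r$. Integrating and using the continuous embedding $H^1(\Omega)\hookrightarrow L^2(\Omega)$ together with $H^1(\Omega)\hookrightarrow L^r(\Omega)$ (since $r<2^*$), I obtain, for all $u\in\hat H_{m+2}$,
$$\varphi_\lambda(u)=\tfrac12\bigl[\gamma(u)-\lambda\|u\|_2^2\bigr]-\int_\Omega F(z,u)\,dz\geq\tfrac{c_0-\varepsilon}{2}\|u\|^2-\tilde C_\varepsilon\|u\|^r.$$

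Finally I would fix $\varepsilon\in(0,c_0)$ and note that the function $\rho\mapsto\tfrac{c_0-\varepsilon}{2}\rho^2-\tilde C_\varepsilon\rho^r$ is strictly positive for all sufficiently small $\rho>0$ because $r>2$. Choosing any such $\rho$ and setting $\tilde C_0:=\tfrac{c_0-\varepsilon}{2}\rho^2-\tilde C_\varepsilon\rho^r>0$ delivers the conclusion. The argument is essentially routine; the only mildly delicate point is step one, namely the convex combination needed to pass from an $L^2$-control to an $H^1$-control of the quadratic form $\gamma-\lambda\|\cdot\|_2^2$, a device made possible precisely by the coercivity shift \eqref{eq2} established in \cite{2DAg-Mar-Pap,10Pap-Rad}.
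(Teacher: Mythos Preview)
Your proof is correct and follows essentially the same route as the paper's: bound $F$ via $H_1(i),(iv)$, use $\lambda<\hat\lambda_{m+2}$ to control the quadratic part by $\|u\|^2$ on $\hat H_{m+2}$, and conclude from $r>2$. Your convex-combination argument for the inequality $\gamma(u)-\lambda\|u\|_2^2\geq c_0\|u\|^2$ on $\hat H_{m+2}$ is in fact more explicit than the paper, which simply asserts the existence of such a $C_{13}>0$ with the remark ``recall that $\lambda<\hat\lambda_{m+2}$''.
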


\begin{proof}
  Hypotheses $H_1(i),(iv)$ imply that given $\varepsilon>0$, we can find $C_\varepsilon>0$ such that
\begin{equation}\label{eq39}
  |F(z,x)|\leq\frac{\varepsilon}{2}x^2+C_\varepsilon|x|^r \mbox{ for a.a. }z\in\Omega \mbox{ and all }x\in\RR.
\end{equation}

Let $u\in \hat{H}_{m+2}$. We have
\begin{eqnarray*}
  \varphi_\lambda(u) &\geq& \frac{1}{2}\gamma(u)-\frac{\lambda}{2}\|u\|_2^2-\frac{\varepsilon}{2}\|u\|^2-\hat{C}_\varepsilon\|u\|^r \\
  && \mbox{ for some }\hat{C}_\varepsilon>0 \mbox{ (see \eqref{eq39}) }\\
  &\geq& \frac{C_{13}-\varepsilon}{2}\|u\|^2-\hat{C}_\varepsilon\|u\|^r \mbox{ for some }C_{13}>0 \mbox{ (recall that $\lambda<\hat{\lambda}_{m+2}$). }
\end{eqnarray*}

Choose $\varepsilon\in(0,C_{13})$. Then we obtain
$$
\varphi_\lambda(u)\geq C_{14}\|u\|^2-\hat{C}_\varepsilon\|u\|^r \mbox{ for some }C_{14}>0 \mbox{ and all }u\in \hat{H}_{m+2}.
$$

Since $2<r$, we can find $\rho\in(0,1)$ so small  that
$$
\varphi_\lambda(u)\geq\tilde{C}_0>0 \mbox{ for all } u\in \hat{H}_{m+2}\cap\partial B_\rho.
$$
The proof is now complete.
\end{proof}

Let $\hat{u}_{m+2}\in E(\hat{\lambda}_{m+2})$ with $\|\hat{u}_{m+2}\|=1$ and let $V=\overline{H}_{m+1}\oplus\RR\hat{u}_{m+2}$, with $\overline{H}_{m+1}=\underset{k=1}{\overset{m+1}{\oplus}}E(\hat{\lambda}_k)$. For $\rho_1>0$, we introduce the set
$$
C=\{u=\overline{u}+\vartheta\hat{u}_{m+2}:\:\overline{u}\in \overline{H}_{m+1},\vartheta\geq0,\|u\|\leq \rho_1\}.
$$

Evidently, we have
$$
\partial C=C_0=\left\{u=\overline{u}+\vartheta\hat{u}_{m+2}:\ \left(\overline{u}\in\overline{H}_{m+1},\vartheta\geq0,
\|u\|=\rho_1\right)\mbox{ or }\left(\overline{u}\in \overline{H}_{m+1},\|\overline{u}\|\leq\rho_1,\vartheta=0\right)\right\}.
$$

\begin{prop}\label{prop4}
  If hypotheses $H_0$, $H_1(i),(ii),(iv),(v)$ hold and $\lambda\in(\hat{\lambda}_{m},\hat{\lambda}_{m+1})$, then there exist $\rho_1>0$ and $\tilde{\delta}>0$ such that
$$
\varphi_\lambda|_{C_0}\leq \tilde{C}_1<\tilde{C}_0
$$
with $\tilde{C}_0>0$ as in Proposition \ref{prop3}.
\end{prop}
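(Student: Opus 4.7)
The plan is to bound $\varphi_\lambda$ separately on the two faces of $C_0$, namely the flat face $A_1 := \overline{H}_{m+1}\cap \overline{B}_{\rho_1}$ (corresponding to $\vartheta=0$) and the hemispherical cap $A_2 := \{u=\overline{u}+\vartheta\hat{u}_{m+2}:\vartheta\geq 0,\ \|u\|=\rho_1\}$. Both pieces sit inside the finite-dimensional subspace $V$, so all norms are equivalent there. The main analytic tool is the standard quantitative consequence of $H_1(ii)$: for every $M>0$ there is a constant $K_M>0$ with $F(z,x)\geq Mx^2-K_M$ for a.a.\ $z\in\Omega$ and all $x\in\RR$.

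On the cap $A_2$, I use $V\subseteq\overline{H}_{m+2}$ and the Rayleigh bound $\gamma(u)\leq\hat{\lambda}_{m+2}\|u\|_2^2$. Inserting the superquadratic lower bound with $M$ so large that $2M>\hat{\lambda}_{m+2}-\lambda$, and using finite-dimensional norm equivalence, yields
\[
\varphi_\lambda(u)\leq \tfrac{\hat{\lambda}_{m+2}-\lambda-2M}{2}\|u\|_2^2+K_M|\Omega|\leq -\alpha\rho_1^2+K_M|\Omega|
\]
on $A_2$ for some $\alpha>0$. Taking $\rho_1$ large forces $\varphi_\lambda|_{A_2}$ strictly below any prescribed negative number, in particular below $\tilde{C}_0$.

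On the base $A_1$ I exploit both the sharper Rayleigh bound $\gamma(\overline{u})\leq\hat{\lambda}_{m+1}\|\overline{u}\|_2^2$ and the closeness of $\lambda$ to $\hat{\lambda}_{m+1}$. Setting $r_0:=\delta_0/C_\infty$, where $C_\infty$ is the finite-dimensional embedding constant $\|\cdot\|_\infty\leq C_\infty\|\cdot\|$ on $\overline{H}_{m+1}$, I split $A_1$ at radius $r_0$. For $\|\overline{u}\|\leq r_0$ one has $|\overline{u}(z)|\leq\delta_0$ a.e., so $H_1(v)$ gives $F(z,\overline{u})\geq 0$ and hence $\varphi_\lambda(\overline{u})\leq\tfrac{1}{2}(\hat{\lambda}_{m+1}-\lambda)\|\overline{u}\|_2^2$, which falls below $\tilde{C}_0$ once $\tilde{\delta}>0$ is chosen small so that $\lambda\in(\hat{\lambda}_{m+1}-\tilde{\delta},\hat{\lambda}_{m+1})$. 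For $r_0<\|\overline{u}\|\leq\rho_1$, I invoke $F\geq Mx^2-K_M$ with $M$ large; since $\|\overline{u}\|_2\geq c_2 r_0$ for some fixed $c_2>0$, the resulting estimate
\[
\varphi_\lambda(\overline{u})\leq -(M-\tfrac{\hat{\lambda}_{m+1}-\lambda}{2})\,c_2^2 r_0^2+K_M|\Omega|
\]
is driven below $\tilde{C}_0$ by a suitable balance of $M$ against $K_M$.

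The main obstacle is this intermediate regime on $A_1$: neither the near-zero positivity of $F$ from $H_1(v)$ nor the full strength of the far-field superquadraticity from $H_1(ii)$ is directly effective at moderate $\|\overline{u}\|$, and the constants $K_M$ do not tend to zero with $M$, so the balance of $M$ against $K_M|\Omega|/r_0^2$ must be arranged with care. It is precisely the closeness of $\lambda$ to $\hat{\lambda}_{m+1}$ (that is, small $\tilde{\delta}$) that makes the small-radius piece negligible, and this, combined with the large-$\rho_1$ choice needed on $A_2$, permits a common $\tilde{C}_1<\tilde{C}_0$ to dominate $\varphi_\lambda$ on all of $C_0$.
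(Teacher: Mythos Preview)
Your strategy diverges from the paper's in a decisive way, and the divergence creates a real gap.

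The paper takes $\rho_1\in(0,1)$ \emph{small}. On the hemispherical part it derives from $H_1(i),(ii),(v)$ a lower bound of the shape $F(z,x)\geq\tfrac{\eta}{2}x^2-\hat C_\eta^*|x|^q$ and concludes $\varphi_\lambda(u)\leq C_{16}\|u\|^q-C_{17}\|u\|^2\leq 0$ for small $\|u\|=\rho_1$, since $q>2$. On the disc it uses only $F(z,x)\geq -C^*|x|^q$ from $H_1(v)$ to get
\[
\varphi_\lambda(\overline u)\;\leq\;\tfrac12(\hat\lambda_{m+1}-\lambda)\|\overline u\|_2^2+C^*\|\overline u\|_q^q\;\leq\;C_{18}\rho_1^2,
\]
and then shrinks $\rho_1$ further. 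The \emph{power-$q$} form of the remainder is the essential point: for small $\rho_1$ the $q$-term is $o(\rho_1^2)$, so there is no intermediate regime to worry about, and no use is made of closeness of $\lambda$ to $\hat\lambda_{m+1}$; the estimate holds for every $\lambda\in(\hat\lambda_m,\hat\lambda_{m+1})$.

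You instead push $\rho_1$ large. That handles $A_2$, but it forces the disc $A_1$ to contain points of moderate norm, where your argument breaks. Your bound on the annulus $r_0<\|\overline u\|\leq\rho_1$ reads
\[
\varphi_\lambda(\overline u)\;\leq\;-\Bigl(M-\tfrac{\hat\lambda_{m+1}-\lambda}{2}\Bigr)c_2^2r_0^2+K_M|\Omega|,
\]
and you assert that a ``suitable balance of $M$ against $K_M$'' drives this below $\tilde C_0$. No such balance need exist: the hypotheses permit, for instance, $F(z,\cdot)=0$ on $[-\delta_0,\delta_0]$, $F(z,x)=-C^*(|x|^q-\delta_0^q)$ on $\delta_0\leq|x|\leq A$ with $A$ large, and then subcritical superquadratic growth beyond $A$. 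For such $F$ one has $K_M\geq MA^2+C^*(A^q-\delta_0^q)$, whence
\[
-Mc_2^2r_0^2+K_M|\Omega|\;\geq\;M\bigl(A^2|\Omega|-c_2^2r_0^2\bigr)+C^*(A^q-\delta_0^q)|\Omega|,
\]
which for $A^2|\Omega|>c_2^2r_0^2$ is bounded below (over all $M\geq 0$) by a positive constant of order $A^q$ --- certainly not below a prescribed $\tilde C_0$. You flag this intermediate regime as ``the main obstacle'' but do not actually overcome it; the constant-remainder inequality $F\geq Mx^2-K_M$ is simply too crude there.

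The missing idea is precisely the one the paper exploits: use the $|x|^q$ form of the lower bound in $H_1(v)$ and work with small $\rho_1$, so that on the whole of $C_0$ one has $\varphi_\lambda\leq C\rho_1^2$ with no intermediate zone and no need for $\tilde\delta$ small.
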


\begin{proof}
  From hypotheses $H_1(i),(ii),(v)$ given $\eta>0$, we can find $\hat{C}_\eta^*>0$ such that
\begin{equation}\label{eq40}
  F(z,x)\geq\frac{\eta}{2}x^2-\hat{C}_\eta^*|x|^q \mbox{ for a.a. }z\in\Omega \mbox{ and all }x\in\RR.
\end{equation}

The space $V$ is finite dimensional and so all norms are equivalent. Let $u\in V$. We have
\begin{eqnarray*}
  \varphi_\lambda(u) &\leq& \frac{1}{2}\gamma(u)-\frac{\lambda}{2}\|u\|_2^2-\frac{\eta}{2}\|u\|_2^2+\frac{\hat{C}_\eta^*}{q}\|u\|_q^q \mbox{ (see \eqref{eq40}) } \\
   &\leq& C_{15}\left[\hat{\lambda}_{m+2}-\lambda-\eta\right]\|u\|^2+C_{16}\|u\|^q \mbox{ for some }C_{15},C_{16}=C_{16}(\eta)>0.
\end{eqnarray*}

Since $\eta>0$ arbitrary, choosing $\eta>0$ big enough, we have
$$
\varphi_\lambda(u)\leq C_{16}\|u\|^q-C_{17}\|u\|^2 \mbox{ for some }C_{17}>0.
$$

Recall that $q>2$. Then we can find $\rho_1\in(0,1)$ so small that
$$
\varphi_\lambda|_{V\cap\partial B_\rho}\leq0<\tilde{C}_0 \mbox{ (see Proposition \ref{prop3}). }
$$

If $\overline{u}\in \overline{H}_{m+1}$, $\|\overline{u}\|\leq \rho_1$, then
\begin{eqnarray*}
  \varphi_\lambda(\overline{u}) &\leq& \frac{1}{2}\gamma(\overline{u})-\frac{\lambda}{2}\|\overline{u}\|_2^2+C^*\|\overline{u}\|_q^q \\
   &\leq& \frac{1}{2}\left[\hat{\lambda}_{m+1}-\lambda\right]\|\overline{u}\|_2^2+C^*\|\overline{u}\|_q^q \mbox{ (see $H_1(v)$) } \\
   &\leq & C_{18}\rho_1^2 \mbox{ (since $q>2$,\ $\lambda<\hat{\lambda}_{m+1}$ and $\rho_1\in(0,1)$).}
\end{eqnarray*}

Choosing $\rho_1\in(0,1)$ even smaller if necessary, we have
$$
\varphi_\lambda|_{\overline{H}_{m+1}\cap\partial B_{\rho_1}}\leq \tilde{C}_1<\tilde{C}_0
$$
for all $\lambda\in(\hat{\lambda}_m,\hat{\lambda}_{m+1})$ and with $\tilde{C}_0>0$ (as in Proposition \ref{prop3}).

Therefore we can conclude that
$$
\varphi_\lambda|_{C_0}\leq \tilde{C}_1<\tilde{C}_0 \mbox{ for }\lambda\in(\hat{\lambda}_m,\hat{\lambda}_{m+1}).
$$
The proof is now complete.
\end{proof}

Now we are ready to produce the seventh nontrivial smooth solution of problem $(P_\lambda)$.

\begin{prop}\label{prop5}
  If hypotheses $H_0$, $H_1$ hold and $\lambda\in(\hat{\lambda}_{m+1}-\hat{\delta},\hat{\lambda}_{m+1})$ (see Proposition \ref{prop2}), then problem $(P_\lambda)$ has a seventh nontrivial solution $\tilde{y}\in C^1(\overline{\Omega})$.
\end{prop}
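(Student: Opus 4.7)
The plan is to exploit the linking geometry established by Propositions~\ref{prop3} and~\ref{prop4} in order to produce a seventh critical point $\tilde{y}$ of $\varphi_\lambda$ via a generalized saddle-point theorem, and then to separate $\tilde{y}$ from the six solutions already produced by comparing critical groups at the specific degree $d_{m+1}+1$, where $d_{m+1}=\dim\overline{H}_{m+1}$.

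First I would verify that $\varphi_\lambda$ itself satisfies the Cerami condition, by adapting the argument used for $\varphi_\lambda^+$ in the proof of Proposition~\ref{prop1}: apply $H_1(iii)$ symmetrically to both signs to bound a $C$-sequence in $L^\tau(\Omega)$, interpolate between $L^\tau$ and $L^{2^*}$ (or any $L^\eta$ when $N=2$), use $H_1(i)$ together with \eqref{eq2} to bound the sequence in $H^1(\Omega)$, and conclude with the Kadec--Klee property. Once the $C$-condition is in hand, the estimates
$\sup_{C_0}\varphi_\lambda\le\tilde{C}_1<\tilde{C}_0\le\inf_{\hat{H}_{m+2}\cap\partial B_\rho}\varphi_\lambda$,
together with the obvious transversality of $V=\overline{H}_{m+1}\oplus\RR\hat{u}_{m+2}$ to $\hat{H}_{m+2}$, show that the pair $(C,C_0)$ links $\hat{H}_{m+2}\cap\partial B_\rho$ in $H^1(\Omega)$. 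The generalized linking (saddle-point) theorem of Chapter~5 of~\cite{13Pap-Rad-Rep} then produces $\tilde{y}\in K_{\varphi_\lambda}$ with $\varphi_\lambda(\tilde{y})\geq\tilde{C}_0>0$, and the standard minimax-to-Morse passage gives
$C_{d_{m+1}+1}(\varphi_\lambda,\tilde{y})\neq 0$.
Since $d_{m+1}\geq m+1\geq 3$, the regularity theory of Wang~\cite{17Wan} yields $\tilde{y}\in C^1(\overline{\Omega})$.

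It remains to distinguish $\tilde{y}$ from each of $\{0,u_0,\hat{u},v_0,\hat{v},y_0,\hat{y}\}$ by computing $C_{d_{m+1}+1}(\varphi_\lambda,\cdot)$ at each of these points. Since $f'_x(z,0)=0$, the form $\varphi_\lambda''(0)=\gamma(\cdot)-\lambda\|\cdot\|_2^2$ is nondegenerate with Morse index exactly $d_m<d_{m+1}+1$, hence $C_k(\varphi_\lambda,0)=\delta_{k,d_m}\ZZ$ vanishes at the target degree. The constant-sign solutions $u_0,v_0$ are local $H^1(\Omega)$-minimizers of $\varphi_\lambda$, so $C_k(\varphi_\lambda,u_0)=C_k(\varphi_\lambda,v_0)=\delta_{k,0}\ZZ$, which also vanishes at $d_{m+1}+1\geq 4$. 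For the second pair $\hat{u},\hat{v}$, which are mountain-pass points of $\varphi_\lambda^{\pm}$ and, by the $C^1$-proximity estimates \eqref{eq33}--\eqref{eq34}, mountain-pass points of $\varphi_\lambda$ itself, the standard Morse-theoretic description of isolated mountain-pass critical points of $C^2$-functionals forces $C_k(\varphi_\lambda,\hat{u})=C_k(\varphi_\lambda,\hat{v})=0$ for $k\geq 2$. Finally, for the nodal solutions $y_0,\hat{y}$ of Proposition~\ref{prop2}, one unfolds the Rabinowitz bifurcation construction at $\hat{\lambda}_{m+1}$: a Lyapunov--Schmidt reduction localizes these two solutions along $\overline{H}_{m+1}$-directions and forces their Morse indices to be at most $d_{m+1}$, whence $C_{d_{m+1}+1}(\varphi_\lambda,y_0)=C_{d_{m+1}+1}(\varphi_\lambda,\hat{y})=0$.

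The main obstacle is the last point. The Rabinowitz--Su--Wang construction is homotopical in nature and does not directly hand over Morse-theoretic data for $y_0,\hat{y}$, so pinning down their critical groups at degree $d_{m+1}+1$ requires either redoing the bifurcation analysis at the level of the gradient flow (so as to track critical groups through the Lyapunov--Schmidt reduction) or, alternatively, an indirect argument comparing the Poincar\'e series of $K_{\varphi_\lambda}$ with the ``topology at infinity'' of the indefinite quadratic $\tfrac{1}{2}\gamma(u)-\tfrac{\lambda}{2}\|u\|_2^2$ via the Morse inequalities, to force the existence of a critical point distinct from the six already constructed.
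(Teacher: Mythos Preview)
Your overall strategy coincides with the paper's: exploit the homological link furnished by Propositions~\ref{prop3} and~\ref{prop4} to produce $\tilde{y}\in K_{\varphi_\lambda}$ with $C_{d_{m+1}+1}(\varphi_\lambda,\tilde{y})\neq 0$, and then separate $\tilde{y}$ from the six known solutions by comparing critical groups at that degree. Two points, however, need correction.

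First, the transfer of mountain-pass information from $\varphi_\lambda^\pm$ to $\varphi_\lambda$ at $\hat{u},\hat{v}$ cannot be done via \eqref{eq33}--\eqref{eq34}: those estimates show that $\varphi_\lambda$ and $\varphi_\lambda^+$ are $C^1$-close only as $\|u\|\to 0$, and give nothing in a neighbourhood of $\hat{u}$. The paper instead uses that $\hat{u}\in{\rm int}\,C_+$ and $\varphi_\lambda|_{C_+}=\varphi_\lambda^+|_{C_+}$ (see \eqref{eq42}), so the critical groups of the two functionals at $\hat{u}$ coincide when computed in $C^1(\overline\Omega)$; the $C^1$-versus-$H^1$ invariance (Theorem 6.6.26 of \cite{13Pap-Rad-Rep}) together with the $C^2$ structure (Proposition 6.5.9 of \cite{13Pap-Rad-Rep}) then yield $C_k(\varphi_\lambda,\hat{u})=C_k(\varphi_\lambda,\hat{v})=\delta_{k,1}\ZZ$.

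Second, the step you flag as the ``main obstacle'' admits a direct solution that avoids both alternatives you sketch. Since $\varphi_\lambda\in C^2(H^1(\Omega))$, the shifting (Gromoll--Meyer) theorem --- invoked in the paper as Corollary 6.2.40 of \cite{13Pap-Rad-Rep} --- gives $C_k(\varphi_\lambda,w)=0$ whenever $k\notin[m(w),\,m(w)+n(w)]$, with $m(w),n(w)$ the Morse index and nullity of $\varphi_\lambda''(w)$. The bifurcating solutions $y_0,\hat{y}$ are small in $C^1(\overline\Omega)$ (this is part of the Rabinowitz--Su--Wang construction and is already exploited via \eqref{eq38}), so by $H_1(iv)$ the weight $f'_x(\cdot,w(\cdot))$ is uniformly small and $\varphi_\lambda''(w)$ is a small perturbation of the form $u\mapsto\gamma(u)-\lambda\|u\|_2^2$. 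As $\lambda\in(\hat\lambda_m,\hat\lambda_{m+1})$, this forces $\varphi_\lambda''(w)$ to be positive on $\hat{H}_{m+2}$ and negative on $\overline{H}_m$, hence $d_m\le m(w)\le m(w)+n(w)\le d_{m+1}$. In particular $C_{d_{m+1}+1}(\varphi_\lambda,y_0)=C_{d_{m+1}+1}(\varphi_\lambda,\hat{y})=0$, with no recourse to global Morse inequalities or a separate Lyapunov--Schmidt reduction.
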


\begin{proof}
  Let $D=\overline{H}_{m+1}\cap\partial B_{\rho_1}$. From Proposition 6.6.5 of Papageorgiou, R\u{a}dulescu \& Repov\v{s} \cite[p. 532]{13Pap-Rad-Rep}, we know that
$$
\{C,C_0\} \mbox{ and }D \mbox{ homologically link in dimension }d_{m+1}+1
$$
with $d_{m+1}=\dim\overline{H}_{m+1}$. Then Propositions \ref{prop3} and \ref{prop4} and Corollary 6.6.8 of \cite{13Pap-Rad-Rep}, imply that there exists $\tilde{y}\in K_{\varphi_\lambda}\subseteq C^1(\overline{\Omega})$ (see Wang \cite{17Wan}) such that
\begin{equation}\label{eq41}
  C_{d_{m+1}+1}(\varphi_\lambda,\tilde{y})\not=0.
\end{equation}

From the proof of Proposition \ref{prop1}, we know that $u_0\in{\rm int}\,C_+$ and $v_0\in-{\rm int}\,C_+$ are local minimizers of $\varphi_\lambda^+$ and of $\varphi_\lambda^-$, respectively. Note that
\begin{equation}\label{eq42}
  \varphi_\lambda|_{C_+}=\varphi_\lambda^+|_{C_+} \mbox{ and }\varphi_\lambda|_{-C_+}=\varphi_\lambda^+|_{-C_+}.
\end{equation}

So, it follows that $u_0\in{\rm int}\,C_+$ and $v_0\in-{\rm int}\,C_+$ are also local minimizers of $\varphi_\lambda$ (see \cite{9Pap-Rad}). Therefore we have
\begin{equation}\label{eq43}
  C_k(\varphi_\lambda,u_0)=C_k(\varphi_\lambda,v_0)=\delta_{k,0}\ZZ \mbox{ for all }k\in\NN_0.
\end{equation}

Also, again from the proof of Proposition \ref{prop1}, we know that the solutions $\hat{u}\in{\rm int}\,C_+$ and $\hat{v}\in-{\rm int}\,C_+$ are critical points of mountain pass type of the functionals $\varphi_\lambda^+$ and $\varphi_\lambda^-$ respectively. Therefore we have
\begin{equation}\label{eq44}
  C_1(\varphi_\lambda^+,\hat{u})\not=0 \mbox{ and }C_1(\varphi_\lambda^-,\hat{v})\not=0 \  \mbox{(see \eqref{eq37}).}
\end{equation}

From \eqref{eq42} and since $\hat{u}\in{\rm int}\,C_+$, $\hat{v}\in-{\rm int}\,C_+$, we have
\begin{equation}\label{eq45}
  C_k\left(\varphi_\lambda^+|_{C^1(\overline{\Omega})},\hat{u}\right)=
C_k\left(\varphi_\lambda|_{C^1(\overline{\Omega})},\hat{u}\right)
\mbox{ and } C_k\left(\varphi_\lambda^-|_{C^1(\overline{\Omega})},\hat{v}\right)=
C_k\left(\varphi_\lambda|_{C^1(\overline{\Omega})},\hat{v}\right)
\end{equation}
for all $k\in\NN_0$.

But on account of Theorem 6.6.26 of Papageorgiou, R\u{a}dulescu \& Repov\v{s} \cite[p. 545]{13Pap-Rad-Rep}, we have
\begin{equation}\label{eq46}
  C_k(\varphi_\lambda^+,\hat{u})= C_k(\varphi_\lambda,\hat{u}) \mbox{ and } C_k(\varphi_\lambda^-,\hat{v})= C_k(\varphi_\lambda,\hat{v})
\end{equation}
for all $k\in\NN_0$.

Since $\varphi_\lambda\in C^2(H^1(\Omega))$, we can infer from \eqref{eq43}, \eqref{eq44}, \eqref{eq46} and Proposition 6.5.9 of Papageorgiou, R\u{a}dulescu \& Repov\v{s} \cite[p. 529]{13Pap-Rad-Rep} that
\begin{equation}\label{eq47}
   C_k(\varphi_\lambda,\hat{u})= C_k(\varphi_\lambda,\hat{v})=\delta_{k,1}\ZZ \mbox{ for all }k\in\NN_0.
\end{equation}

Recall that
\begin{equation}\label{eq48}
  C_k(\varphi_\lambda,0)=\delta_{k,d_m}\ZZ \mbox{ (see \eqref{eq36}). }
\end{equation}

Moreover, from Corollary 6.2.40 of Papageorgiou, R\u{a}dulescu \& Repov\v{s} \cite[p. 449]{13Pap-Rad-Rep}, we have
\begin{equation}\label{eq49}
  C_k(\varphi_\lambda,y_0)=C_k(\varphi_\lambda,\hat{y})=0 \mbox{ for }k\not\in[d_m,d_{m+1}] \mbox{ (recall that $d_m\geq2$).}
\end{equation}

We can infer from  \eqref{eq41}, \eqref{eq43}, \eqref{eq47}, \eqref{eq48}, \eqref{eq49} that
\begin{eqnarray*}
   && \tilde{y}\not\in\{u_0,v_0,\hat{u},\hat{v},0,y_0,\hat{y}\}, \\
   &\Rightarrow& \tilde{y}\in C^1(\overline{\Omega}) \mbox{ is the seventh nontrivial solution of  }(P_\lambda) \\
   && (\lambda\in(\hat{\lambda}_{m+1}-\hat{\delta},\hat{\lambda}_{m+1})).
\end{eqnarray*}
The proof is now complete.
\end{proof}

So, summarizing our findings for problem $(P_\lambda)$, we can state the following multiplicity theorem.

\begin{theorem}
  If hypotheses $H_0$, $H_1$ hold, then there exists $\hat{\delta}>0$ such that for all $\lambda\in(\hat{\lambda}_{m+1}-\hat{\delta},\hat{\lambda}_{m+1})$ problem $(P_\lambda)$ has at least seven distinct nontrivial smooth solutions
\begin{eqnarray*}
  && u_0,\hat{u}\in{\rm int}\,C_+,\ v_0,\hat{v}\in-{\rm int}\,C_+,\ y_0,\hat{y}\in C^1(\overline{\Omega}) \mbox{ nodal } \\
  && \tilde{y}\in C^1(\overline{\Omega}).
\end{eqnarray*}
\end{theorem}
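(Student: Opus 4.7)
The plan is to assemble the conclusions of Propositions \ref{prop1}, \ref{prop2}, and \ref{prop5}, which have already done the hard analytical work, and then verify that the seven solutions they produce are pairwise distinct. First, I would fix $\hat{\delta}>0$ to be the constant furnished by Proposition \ref{prop2}, shrinking it if necessary so that $\hat{\lambda}_{m+1}-\hat{\delta}>\hat{\lambda}_m$ (this ensures the parameter window lies inside the interval $(\hat{\lambda}_m,\hat{\lambda}_{m+1})$ required by Propositions \ref{prop1} and \ref{prop5}). For any $\lambda\in(\hat{\lambda}_{m+1}-\hat{\delta},\hat{\lambda}_{m+1})$, Proposition \ref{prop1} then delivers $u_0,\hat{u}\in{\rm int}\,C_+$ with $u_0\neq\hat{u}$ and $v_0,\hat{v}\in-{\rm int}\,C_+$ with $v_0\neq\hat{v}$; Proposition \ref{prop2} delivers two nodal solutions $y_0,\hat{y}\in C^1(\overline{\Omega})$ with $y_0\neq\hat{y}$; and Proposition \ref{prop5} delivers a seventh nontrivial solution $\tilde{y}\in C^1(\overline{\Omega})$.

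The heart of the proof is therefore the distinctness of these seven elements of $K_{\varphi_\lambda}$. Among the first six, the separation is immediate from sign: the four constant-sign solutions lie in $\pm{\rm int}\,C_+$ and hence cannot coincide with the nodal pair $y_0,\hat{y}$, nor can a positive solution coincide with a negative one; the within-class separations $u_0\neq\hat{u}$, $v_0\neq\hat{v}$, $y_0\neq\hat{y}$ were already established inside their respective propositions (via the mountain-pass/local-minimizer dichotomy for Proposition \ref{prop1} and the bifurcation argument of Rabinowitz-Su-Wang for Proposition \ref{prop2}).

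The main obstacle, and the only step requiring care, is showing that $\tilde{y}$ is distinct from each of the other six. This is carried out by comparing critical groups at level $k=d_{m+1}+1$. Proposition \ref{prop5} yields $C_{d_{m+1}+1}(\varphi_\lambda,\tilde{y})\neq 0$ (see \eqref{eq41}). On the other hand, the local minimizer computation \eqref{eq43} gives $C_k(\varphi_\lambda,u_0)=C_k(\varphi_\lambda,v_0)=\delta_{k,0}\ZZ$, the mountain-pass computation \eqref{eq47} gives $C_k(\varphi_\lambda,\hat{u})=C_k(\varphi_\lambda,\hat{v})=\delta_{k,1}\ZZ$, the Morse-theoretic identity \eqref{eq48} gives $C_k(\varphi_\lambda,0)=\delta_{k,d_m}\ZZ$, and the dimension-bound \eqref{eq49} forces $C_k(\varphi_\lambda,y_0)=C_k(\varphi_\lambda,\hat{y})=0$ for $k\notin[d_m,d_{m+1}]$. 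Since $d_m\geq 2$ and $d_{m+1}+1$ lies strictly above the interval $[d_m,d_{m+1}]$ and is different from $0$, $1$, and $d_m$, every one of these six critical groups vanishes in degree $d_{m+1}+1$, which rules out $\tilde{y}$ coinciding with any of them. This completes the argument; the $C^1(\overline{\Omega})$-regularity of all seven solutions is a consequence of Wang's regularity theory \cite{17Wan}, already invoked in each proposition.
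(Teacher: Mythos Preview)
Your proposal is correct and follows the paper's approach exactly: the paper presents the theorem as a direct summary of Propositions \ref{prop1}, \ref{prop2}, and \ref{prop5} without a separate proof, and the distinctness of $\tilde{y}$ from the other six solutions via the critical-group comparison you spell out (using \eqref{eq41}, \eqref{eq43}, \eqref{eq47}, \eqref{eq48}, \eqref{eq49}) is already carried out verbatim inside the proof of Proposition \ref{prop5}. Your only addition is making explicit the trivial sign-based separations among the first six and the harmless shrinking of $\hat{\delta}$ to ensure $\hat{\lambda}_{m+1}-\hat{\delta}>\hat{\lambda}_m$, both of which the paper leaves implicit.
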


{\em Open problem.} Is it possible to show that $\tilde{y}$ is nodal (see \cite{3Hu-Pap, 8Pap-Pap})? Also, it seems that we cannot generate more than seven solutions without symmetry hypotheses (see \cite{1Cas-Cas-Vel}).

\subsection*{Acknowledgments} This research was supported by the Slovenian Research Agency grants
P1-0292, J1-8131, N1-0114, N1-0064, and N1-0083. 
 The authors  thank the referees for comments and suggestions.

\end{document}